\documentclass{article}
\usepackage{graphicx} 
\title{Mean-field limits for stochastic particle systems on dense graphs}
\date{\today}

\usepackage[T1]{fontenc}
\usepackage{amsthm}
\usepackage{dsfont}
\usepackage[utf8]{inputenc}
\usepackage{amsfonts}
\usepackage{amssymb}
\usepackage{amsmath}
\usepackage{mathrsfs}
\usepackage{stmaryrd}
\usepackage{bbm}
\usepackage{enumerate}
\usepackage{comment}
\usepackage{xcolor}
\usepackage[normalem]{ulem}
\usepackage{xfrac}

\usepackage[colorinlistoftodos]{todonotes}

\usepackage{hyperref}

\newcommand{\E}{\mathbb{E}}

\newcommand{\Lcal}{\mathcal{L}}
\newcommand{\N}{\mathbb{N}}
\newcommand{\dd}{\mathrm d}

\newtheorem{theorem}{Theorem}[section]
\newtheorem{lemma}[theorem]{Lemma}
\newtheorem{proposition}[theorem]{Proposition}
\newtheorem{corollary}[theorem]{Corollary}

\newtheorem{remark}[theorem]{Remark}
\newtheorem{example}{Example}
\newtheorem{assumption}{Assumption}
\begin{document}
\author{A. Koutsimpela\footnote{School of Applied Mathematical and Physical Sciences, National Technical University
of Athens, 15780 Athens, Greece}, E. Magnanini\footnote{WIAS, Anton-Wilhelm-Amo-Straße 39, 10717 Berlin, Germany.}}
\maketitle

\begin{abstract}
We study stochastic interacting particle systems whose interaction structure is described by dense weighted directed graphs converging to a graphon.
In the thermodynamic limit, we prove a law of large numbers for the empirical measure process and derive a deterministic nonlinear master equation describing the macroscopic evolution. 
The limiting equation retains the heterogeneous interaction structure of the microscopic system through the limiting graphon, allowing for spatially non-homogeneous behaviors such as localized or community-type interactions.

\end{abstract}

\noindent  \bigskip
\\
{\bf Keywords:} Interacting particle systems, monomer-exchange, thermodynamic limit, graphons, spatial interaction, empirical measure 
\\\\
{\bf AMS Subject Classification 2010:} 60K35;  35Q70; 82C22.


\section{Introduction}

Interacting particle systems provide a fundamental framework for the microscopic description of collective stochastic dynamics and their emergent macroscopic  behaviour. We consider stochastic particle systems, where only one particle jumps at a time (monomer-exchange) between sites according to local interaction rules while preserving the total mass, leading to a rich class of models including zero-range \cite{godreche2016coarsening, jatuviriyapornchai2016coarsening}, inclusion \cite{chleboun2023sizebiased}, coagulation processes \cite{AIM2, AIM1}, and more general misanthrope-type processes \cite{cocozza1985processus}. Such systems have been extensively studied in probability theory and statistical mechanics, both from the point of view of invariant measures (see e.g. \cite{chleboun2014condensation, armendariz2013zero}) and large-scale limits \cite{daipra2017, pakdaman2010fluid}.

In the classical mean-field setting, particles evolve on complete or homogeneous graphs, where each site interacts equally with all others. In the large-system limit, such stochastic particle systems are often described by nonlinear mean-field master equations governing the time evolution of the occupation distribution at a typical lattice site \cite{godreche2003dynamics,godreche2005dynamics, evans2014condensation}. A rigorous derivation of such mean-field equations for conservative misanthrope-type systems on complete graphs was obtained in \cite{grosskinsky2019derivation}, where convergence of empirical measures towards nonlinear exchange-driven growth equations was established under bilinear growth assumptions on the rates.
More recently, this framework was extended in \cite{Koutsimpela2025MeanField} to symmetric superlinear kernels. Related developments concerning tagged particles and size-biased empirical measures were obtained in \cite{SGAKtagged}, where the limiting dynamics was connected to the environment seen from a tagged particle. These works remain within the homogeneous mean-field setting, meaning that the interaction structure is independent of the spatial location of the particles. 

In the present work we replace the homogeneous mean-field interaction structure by an inhomogeneous one. Instead of assuming that every particle interacts with every other particle with the same intensity, we allow the interaction rates to depend on the spatial labels; this is achieved via a sequence of dense weighted directed graphs converging to a graphon (see, e.g.
\cite{Lovsz2012LargeNA} and the references therein).
This extension makes it possible to model heterogeneous interaction geometries, very often present in systems arising from applications. 
Related mean-field limits on dense graphon-based networks have recently been
studied in the context of non-conservative dynamics, including interacting
diffusions and McKean--Vlasov systems; see
\cite{Bet_Coppini_Nardi_2024,COPPINI2025104728,oliveira2018interacting}.



More specifically, in our model the microscopic dynamics is given by a general misanthrope-type process in which particles jump from a site $x$ to a site $y$ at rate
\[
q_L(x,y)c(\eta_x,\eta_y),
\]
where the weights $q_L(x,y)$ encode the geometry of the underlying interaction network (of size $L$) and converge, after rescaling, to a limiting graphon \(W\). Simultaneously, the interaction rates are allowed to depend, via the function $c(\cdot,\cdot)$, on the occupation numbers (resp. $\eta_x$ and $\eta_y)$ at the departure and arrival sites. This setting covers a broad family of conservative particle systems.

To each configuration we associate an empirical measure on the product space $[0,1]\times\mathbb N_0$, encoding the spatial label and the local occupation number. Under suitable assumptions on the function $c$ and on the convergence of the weighted graphs toward a graphon, we prove a law of large numbers for the empirical process in the thermodynamic limit. The limiting dynamics is deterministic and characterized as the unique weak solution of a nonlinear master equation whose coefficients depend explicitly on the graphon $W$.

The paper is organized as follows. In Sec.~\ref{sec:2} we introduce the model and we state the main result, Thm.~\ref{mainthm}, together with
other auxiliary statements.
Some examples of graphon geometries are also provided in Subsec.~\ref{subsec:examples}. Sec.~\ref{Sec3:proofs} is devoted to proofs. Specifically, Subsec~\ref{subsec:tightness} provides the tightness result, in Subsec.~\ref{subsec:id_limit} we identify the limiting equation and in Subsec.~\ref{subsec:uniqueness} we prove uniqueness. 


\paragraph{Technical contribution.}
Compared with the homogeneous mean-field setting, the graphon framework
introduces two additional analytical aspects. First, the limiting kernel
is only assumed to be measurable, so the nonlinear drift functional is not
continuous under weak convergence. We deal with this issue through a
continuous-kernel approximation in \hyperref[step4]{Step 4} in Subsec~\ref{subsec:id_limit}.

Second, uniqueness of the limiting equation requires a stability estimate for
the occupation-level marginals. Since the difference of two solutions is a
signed measure, rather than a function, standard $L^1$-based arguments do
not apply directly. We therefore represent the
occupation marginals with respect to a common dominating measure and derive
the stability estimate at the level of their densities (see Subsec.~\ref{subsec:uniqueness}). 

\section{Model, notation and main results} \label{sec:2}


\subsection{Model and notation}\label{Sec:settin}
For each $L\in\mathbb N$, let $G_L = (V_L,q_L)$
be a finite weighted directed graph with vertex set
$V_L := \{1,\dots,L\}$, 
and weights $q_L : V_L \times V_L \to [0,\infty)$ with $ q_L(x,x)=0.$ We assume that the interaction is in the dense mean-field regime, i.e. that the rescaled weights 
 \begin{equation}
     A^{(L)}_{xy}:=L q_L(x,y)
 \end{equation}
remain of order 1 as $L\to \infty,$ i.e. 
\begin{equation}\label{eq:C_W}
C_W:=\sup_L \sup\limits_{x,y}A^{(L)}_{xy}<\infty.
\end{equation}
\noindent
To each site $x\in V_L$ we associate the point 
\(
u_x := \frac{x}{L} \in [0,1]\ .
\)
We associate to the graph $G_L$ the step kernel 
$W_L : [0,1]^2 \to [0,\infty)$ defined by
\[
W_L(u_x,u_y) := L\,q_L(x,y)\] 
and 
\[ W_L(u,v):= W_L(u_x,u_y) \quad \text{if } (u,v) \in I_x \times I_y,
\]
where $(I_x)_{x=1}^L$ is the partition of $[0,1]$ given by
\[
I_x := \Big(\frac{x-1}{L}, \frac{x}{L}\Big].
\]

We work under the following assumption, which ensures that the graphon sequence admits a limit.
\medskip

\begin{assumption}[Graphon convergence]\label{assump:graphons1}
There exists a measurable and bounded function $W : [0,1]^2 \to [0,\infty)$ such that 
\begin{equation}\label{cutnorm}
\|W_L - W\|_{\square} \longrightarrow 0 \qquad \text{as } L\to\infty,
\end{equation}
where $\|\cdot\|_{\square}$ denotes the cut norm, 
\begin{equation}\label{eq:cut_norm}
\|W\|_{\square}
=
\sup_{\substack{\varphi,\psi:[0,1]\to[-1,1] \\ \text{measurable}}}
\left|
\int_0^1 \int_0^1 W(u,v)\,\varphi(u)\psi(v)\,du\,dv
\right|.
\end{equation}
\end{assumption}


\paragraph{Generator(s).} In the following, we consider stochastic particle systems $(\eta(t):t>0)$ on the graph $G_L$  and configurations are denoted by ${\eta} =(\eta_x :x\in V_L)$, where $\eta_x \in \N_0$ is the number of particles on vertex $x\in V_L.$ We consider systems with a fixed number of particles $N=\sum_{x\in V_L} \eta_x$ and we denote by $E_{L,N}
:=
\Bigl\{
\eta\in\mathbb N_0^{V_L}:
\sum_{x\in V_L}\eta_x=N
\Bigr\}$ the state space of all such configurations.\\
The dynamics of the process is defined by the infinitesimal generator 
\begin{equation}
	\label{eq:GenMis}
	(\mathcal{L}_Lg)({\eta}):=\sum_{x,y\in V_L}q_L(x,y)c(\eta_{x},\eta_{y})(g({\eta}^{x\rightarrow y})-g({\eta})) \ ,\quad g\in C_b (E_{L,N}) \ ,
\end{equation}
where $C_b(E_{L,N})$ denotes the space of bounded continuous functions on
$E_{L,N}$. Note that, since $E_{L,N}$ is finite, every real-valued function on
$E_{L,N}$ belongs to $C_b(E_{L,N})$.
Here, $\eta^{x\to y}$ denotes the configuration obtained by moving one particle from site $x$ to site $y$, namely
$
\eta_k^{x\to y}
:=
\eta_k-\delta_{k,x}+\delta_{k,y}$,
 $k\in V_L,
$
and the quantity
$
q_L(x,y)c(\eta_x,\eta_y)
$
is the corresponding jump rate.
To ensure that the process is non-degenerate, the jump rates satisfy
\begin{equation}\label{cassum}
	\left\{ \begin{array}{cl}
		c(0,l)=0\;&\mbox{for all }\ell\geq 0\\
		c(k,\ell)>0\;&\mbox{for all }k>0\;\mbox{and }\ell\geq 0.
	\end{array} \right.
\end{equation}
Our main further assumption on the dynamics is that the rates grow sublinearly, in the sense that they are bounded by a bilinear function:
\begin{assumption}[Bound on the rates]\label{assump2}
\begin{equation}
	\label{eq:lip}
	c(k,\ell)\leq C_c k (1+\ell) \quad\mbox{for constant }C_c >0\ .
\end{equation}
\end{assumption}
\noindent
The configuration $\eta$ can also be encoded in terms of the empirical measure 
\[
\pi^L(\eta)
:=
\frac{1}{L} \sum_{x=1}^L \delta_{(u_x,\eta_x)}
\in \mathcal P([0,1]\times \mathbb{N}_0),
\]
where 
$
\mathcal P([0,1]\times\mathbb N_0)$
denotes the space of probability measures on
$
[0,1]\times\mathbb N_0$.
In particular, for any Borel set $A\subset [0,1]$, and any $k\in N_0$, \[
\pi^L(\eta)\big(A\times\{k\}\big)
=
\frac{1}{L}\,
\#\Big\{ x\in V_L:\ u_x\in A,\ \eta_x= k \Big\},
\] 
that is, $\pi^L(\eta)(A\times\{k\})$ represents the fraction of vertices
whose rescaled labels belong to $A$ and whose occupation number equals $k$.
For the Markov process $(\eta^L(t))_{t\ge0}$ generated by $\mathcal L_L$,  we
write $\pi_t^L := \pi^L(\eta^L(t))$ for the corresponding empirical 
process, which takes values in
$\mathcal P\!\left([0,1]\times\mathbb N_0\right)$.
Now, for $\varphi \in C_b([0,1]\times \mathbb{N}_0)$, define
\begin{equation}\label{eq:pi_phi}
\langle \pi^L_t, \varphi \rangle
:=
\frac{1}{L}\sum_{x=1}^L \varphi(u_x,\eta_x(t)).
\end{equation}
With this notation, the generator \eqref{eq:GenMis} can be rewritten as
\begin{equation}
\label{eq:generator_empirical}
\mathcal{L}_L \langle \pi^L, \varphi \rangle
=
\iint_{([0,1]\times\mathbb N_0)^2} W_L(u,v)c(k,\ell)\,G_\varphi((u,k),(v,\ell))\,
\pi^L(du,dk)\,\pi^L(dv,d\ell),
\end{equation}
where
\[
G_\varphi((u,k),(v,\ell))
:=
\varphi(u,k-1)-\varphi(u,k)
+ \varphi(v,\ell+1)-\varphi(v,\ell).
\]
Notice that the first moment is conserved and is uniformly integrable:
\[
\sup_L \int_{[0,1]\times\mathbb N_0} k\,\pi^L_0(du,dk) = \frac{1}{L} \sum_{x=1}^L \eta_x(0) = \frac{1}{L} \sum_{x=1}^L \eta_x(t) =\frac{N}{L}< \rho.
\]
Now we introduce the following notation. 
\begin{itemize}
\item We denote by $\mathbb{P}^L$ the law of $\pi^L$ and by $\mathbb{E}^L$ the associated expectation. 
\item For $\varphi\in C_b([0,1]\times\mathbb N_0)$, we denote by
$
\mathbb P_{\varphi}^L$ the law of the process $
(\langle \pi_t^L,\varphi\rangle)_{t\ge0}$
on the path space \(D([0,\infty),\mathbb R)\).
Equivalently, 
$\mathbb P_\varphi^L$ is the pushforward of \(\mathbb P^L\) under the map $
\pi \mapsto \langle \pi_t,\varphi\rangle\bigr.
$.
\end{itemize}

We denote the second moment by 
\begin{equation}
m^L_2(t):=\E^L \left[ \frac{1}{L}\sum \limits_{x\in V_L} \eta^2_x(t) \right]
\end{equation}
and we further assume a uniform bound on the initial second moment.
\begin{assumption}[Bound on the second moment]\label{assump:3}
\begin{equation}\label{initialcon0}
m^L_2(0)\leq C_2, \; \text{ for all } L\geq 1,
\end{equation} 
\end{assumption}
For our main result we will consider the thermodynamic limit with density $\rho$, i.e.
\begin{equation}\label{thermo}
	L\to\infty ,\ N=N_L \to\infty \quad\mbox{such that}\quad N/L\to\rho\geq 0\ .
\end{equation} 
Under condition \eqref{thermo}, the sequence $N/L$ is uniformly bounded,
 hence we may assume that there exists $\rho>0$ such that
\begin{equation}\label{thermob}
	 N/L\leq \rho\quad\mbox{for all }L\geq 1\ .
\end{equation}
\paragraph{Initial condition.} We consider a sequence of random initial configurations
$
(\eta^L(0))_{L\ge1}$,
and assume that the associated empirical measures satisfy
\begin{equation}\label{init_conv}
\pi_0^L
\Rightarrow
\pi_0
\qquad
\text{in }
\mathcal P([0,1]\times\mathbb N_0) \qquad \text{in probability},
\end{equation}
 where $\pi_0$ is a (deterministic) element of $\mathcal{P}([0,1]\times\mathbb N_0)$ with 
\begin{equation}\label{initialcon0b}
  \int \limits_{[0,1]\times \N_0} k \pi_0(du,dk) =\rho \text{ and }
    \int \limits_{[0,1]\times \N_0} k^2 \pi_0(du,dk) <\infty  
\end{equation}


\paragraph{Topologies on the spaces.}
We equip $\mathcal{P}([0,1]\times\mathbb N_0)$ with a metric $\delta$ (such as the Prokhorov metric) that induces the weak topology on $\mathcal{P}([0,1]\times\mathbb N_0)$\footnote{We recall that the \emph{weak} topology on $\mathcal{P}([0,1]\times\mathbb N_0)$ is the smallest topologies that makes the maps $\mu \rightarrow \int_{[0,1]\times\mathbb N_0} f(x) \mu(\dd x)$ continuous for all $f \in C_{b}([0,1]\times\mathbb N_0)$.}. Instead, the trajectories of $\pi^L$, live in the space of right-continuous functions with left-limits (càdlàg). We denote such space by $D([0,T], \mathcal{P}([0,1]\times \mathbb{N}_0))$ and we equip it with the $J_1$ Skorokhod topology. 
\subsection{Main results} \label{subs:main_Res}

\begin{proposition}[Second moment bound] \label{sec_mom}
 Assume  Assumptions~\ref{assump:graphons1}-\ref{assump2}. Then, there exists a constant $C >0$ such that
\begin{equation}
    m_2^L (t)\leq (1+m^L_2(0) ) e^{Ct} \quad\mbox{for all }t\geq 0\mbox{ and }L\geq 1\ .
\end{equation}
If, in addition, Assumption~\ref{assump:3} holds, then  there exists a constant $C >0$ such that
\begin{equation}
    m_2^L (t)\leq C e^{Ct} \quad\mbox{for all }t\geq 0\mbox{ and }L\geq 1\ .
\end{equation}
 
\end{proposition}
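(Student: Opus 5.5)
We apply the generator \eqref{eq:GenMis} to the function $g(\eta)=\frac1L\sum_{x\in V_L}\eta_x^2$ and close a Gronwall inequality for $m_2^L$. Since $E_{L,N}$ is finite, $g$ is bounded on the state space. Dynkin's formula therefore applies directly and gives
\[
m_2^L(t)=m_2^L(0)+\int_0^t \E^L\big[(\mathcal L_L g)(\eta^L(s))\big]\,ds .
\]
Moving one particle from $x$ to $y$ changes $\eta_x^2+\eta_y^2$ by $(\eta_x-1)^2-\eta_x^2+(\eta_y+1)^2-\eta_y^2 = 2(\eta_y-\eta_x)+2$. Hence
\[
(\mathcal L_L g)(\eta)=\frac{2}{L}\sum_{x,y} q_L(x,y)\,c(\eta_x,\eta_y)\,(\eta_y-\eta_x+1).
\]

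The second step bounds this expression using only upper bounds. Since $c\ge 0$, we can drop the $-\eta_x$ term, so $\eta_y-\eta_x+1\le 1+\eta_y$. We then use Assumption~\ref{assump2}, namely $c(\eta_x,\eta_y)\le C_c\,\eta_x(1+\eta_y)$, together with $q_L(x,y)=A^{(L)}_{xy}/L\le C_W/L$ from \eqref{eq:C_W}. This yields
\[
(\mathcal L_L g)(\eta)\le \frac{2C_cC_W}{L^2}\sum_{x,y}\eta_x(1+\eta_y)^2
=2C_cC_W\,\frac{N}{L}\cdot\frac1L\sum_y(1+\eta_y)^2 .
\]
The expression factorizes in $x$ and $y$, and the conserved mass appears as $\frac1L\sum_x\eta_x=N/L\le\rho$ by \eqref{thermob}. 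Using $(1+\eta_y)^2\le 2+2\eta_y^2$, we obtain the pointwise bound $(\mathcal L_L g)(\eta)\le 4C_cC_W\rho\,(1+g(\eta))$. This bound is uniform in $L$.

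Taking expectations gives $1+m_2^L(t)\le 1+m_2^L(0)+C\int_0^t (1+m_2^L(s))\,ds$ with $C=4C_cC_W\rho$. Gronwall's lemma then yields $m_2^L(t)\le 1+m_2^L(t)\le(1+m_2^L(0))e^{Ct}$, which is the first claim. Under Assumption~\ref{assump:3} we have $1+m_2^L(0)\le 1+C_2$. Enlarging the constant to $\max(C,1+C_2)$ gives the second claim.

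The main obstacle is the potentially superlinear growth: the bilinear rate bound produces a term $\eta_x\eta_y^2$, which is cubic. The sum over $x$ and $y$ must factorize so that the cubic term becomes (first moment)$\times$(second moment). This factorization works only because the graph weights are bounded uniformly by $C_W/L$, which allows us to discard the graph structure entirely. Assumption~\ref{assump:graphons1} is in fact not needed beyond this bound. Mass conservation then controls the first-moment factor deterministically by $\rho$.
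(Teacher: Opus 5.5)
Your proof is correct and takes essentially the same route as the paper. You apply the generator to the second moment, use $q_L\le C_W/L$ and the bilinear bound on $c$ so that the double sum factorizes into the conserved first moment $N/L\le\rho$ times the second moment, and close with Gronwall. The only cosmetic difference is that you compute the increment $2(\eta_y-\eta_x+1)$ of $\sum_x\eta_x^2$ in one step, whereas the paper applies $\mathcal L_L$ to each $\eta_x^2$ separately and then symmetrizes the gain and loss terms, which yields the same expression.
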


\begin{proposition}[Tightness]\label{tightness}
 Assume  Assumptions~\ref{assump:graphons1}-\ref{assump2}-\ref{assump:3}. Then the sequence of measures $(\mathbb{P}_{\varphi}^L)_{L\geq 1}$ is tight.
\end{proposition}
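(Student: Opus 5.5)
We will prove tightness of $(\mathbb P^L_\varphi)_{L\ge1}$ in $D([0,T],\mathbb R)$ for each fixed $T>0$ and each $\varphi\in C_b([0,1]\times\mathbb N_0)$ via Aldous' criterion. Two things must be checked: (i) for every $t\in[0,T]$ the family of real random variables $\langle\pi^L_t,\varphi\rangle$ is tight; (ii) for every $\varepsilon>0$,
\[
\lim_{\delta\to0}\limsup_{L\to\infty}\sup_{\tau,\ \theta\le\delta}\mathbb P^L\big(|\langle\pi^L_{\tau+\theta},\varphi\rangle-\langle\pi^L_\tau,\varphi\rangle|>\varepsilon\big)=0,
\]
where $\tau$ ranges over stopping times bounded by $T$. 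Point (i) is immediate, since $|\langle\pi^L_t,\varphi\rangle|\le\|\varphi\|_\infty$ deterministically.

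For (ii) we use the Dynkin martingale decomposition
\[
\langle\pi^L_t,\varphi\rangle=\langle\pi^L_0,\varphi\rangle+\int_0^t\mathcal L_L\langle\pi^L_s,\varphi\rangle\,ds+M^{L,\varphi}_t .
\]
Its quadratic variation is given by the carré du champ:
\[
\langle M^{L,\varphi}\rangle_t=\int_0^t\Big(\mathcal L_L\langle\pi^L,\varphi\rangle^2-2\langle\pi^L,\varphi\rangle\mathcal L_L\langle\pi^L,\varphi\rangle\Big)(s)\,ds .
\]

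\textbf{Drift term.} By \eqref{eq:generator_empirical} we have $|G_\varphi|\le4\|\varphi\|_\infty$ and $W_L\le C_W$ from \eqref{eq:C_W}. Combining this with Assumption~\ref{assump2},
\[
|\mathcal L_L\langle\pi^L,\varphi\rangle|\le4\|\varphi\|_\infty C_WC_c\,\frac1{L^2}\sum_{x,y}\eta_x(1+\eta_y)\le4\|\varphi\|_\infty C_WC_c\,\rho(1+\rho),
\]
using conservation of mass and \eqref{thermob}. The drift is therefore bounded uniformly in $L$ and $\eta$, so the integral over $[\tau,\tau+\theta]$ is at most $C\delta$ deterministically. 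This already makes the drift contribution negligible.

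\textbf{Martingale term.} Each jump $\eta\to\eta^{x\to y}$ changes $\langle\pi^L,\varphi\rangle$ by at most $4\|\varphi\|_\infty/L$. The predictable quadratic variation rate is therefore
\[
\sum_{x,y}q_L(x,y)c(\eta_x,\eta_y)\Big(\tfrac{4\|\varphi\|_\infty}{L}\Big)^2\le\frac{16\|\varphi\|^2_\infty C_WC_c}{L}\cdot\frac1{L^2}\sum_{x,y}\eta_x(1+\eta_y)\le\frac{C}{L}.
\]
By Doob's optional sampling and Chebyshev,
\[
\mathbb P\big(|M_{\tau+\theta}-M_\tau|>\varepsilon\big)\le\varepsilon^{-2}\,\mathbb E\big[\langle M\rangle_{\tau+\theta}-\langle M\rangle_\tau\big]\le C\delta/(\varepsilon^2L)\to0 .
\]
Putting the drift and martingale bounds together gives (ii). Tightness on $D([0,\infty),\mathbb R)$ then follows since tightness holds on every $[0,T]$.

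\textbf{Main obstacle.} The only potentially delicate point is controlling the superlinear parts of the rates, namely terms of the form $\eta_x\eta_y$. Under the bilinear bound, the product structure factorizes into $\big(\tfrac1L\sum_x\eta_x\big)\big(1+\tfrac1L\sum_y\eta_y\big)$, which conservation of mass controls deterministically. Because of this, the second moment bound of Proposition~\ref{sec_mom} is not strictly needed here. If one instead wants tightness of the measure-valued process, or of unbounded test functions such as $\varphi(u,k)=k$, one would invoke Proposition~\ref{sec_mom} (with Assumption~\ref{assump:3}) to control $\mathbb E^L[\frac1L\sum_x\eta_x^2]$ uniformly on $[0,T]$. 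This gives uniform integrability in $k$ and compact containment, with $\{k\le K\}$ chosen via Markov's inequality.
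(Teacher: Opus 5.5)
Your proposal is correct and follows the paper's proof essentially step for step. Both use Aldous's criterion, the deterministic bound $|\langle\pi_t^L,\varphi\rangle|\le\|\varphi\|_\infty$, the Dynkin decomposition with the drift bounded by $4C_cC_W\|\varphi\|_\infty\rho(1+\rho)$ via the bilinear factorization and mass conservation, and a carré du champ bound of order $1/L$. The only cosmetic difference is that you apply Chebyshev directly to the martingale increment, whereas the paper bounds $\E^L|\Phi^L_{\tau+\delta}-\Phi^L_\tau|$ via Cauchy--Schwarz and then uses Markov's inequality; and you are right that Proposition~\ref{sec_mom} is not actually needed for this statement.
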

\begin{theorem}[Law of large numbers]\label{mainthm}
Under Assumptions~\ref{assump:graphons1}-\ref{assump2}-\ref{assump:3}, for every $T>0$,
\begin{equation}\label{eq:main_Res}
\pi^L \;\Longrightarrow\; \pi
\quad \text{in } D([0,T], \mathcal{P}([0,1]\times \mathbb{N}_0)),
\end{equation}
where $\pi=(\pi_t)_{t\in[0,T]}$ is  the unique solution of the following equation 
$\varphi\in C_b([0,1]\times\mathbb N_0)$,
\begin{equation}\label{eq:weak_limit}
\langle \pi_t,\varphi\rangle
=
\langle \pi_0,\varphi\rangle
+
\int_0^t
\iint_{([0,1]\times\mathbb N_0)^2}
W(u,v)c(k,\ell)\,
G_\varphi((u,k),(v,\ell))\,
\pi_s(du,dk)\pi_s(dv,d\ell)\,ds,
\end{equation}
with initial condition $\pi_0$,  satisfying 
\(
\int_0^T \langle \pi_t,k^2\rangle\,dt<\infty
\) for every $T>0$.
Here,
\[
G_\varphi((u,k),(v,\ell))
:=
\varphi(u,k-1)-\varphi(u,k)
+\varphi(v,\ell+1)-\varphi(v,\ell),
\]
where by convention, \(\varphi(u,-1):=0\).
\end{theorem}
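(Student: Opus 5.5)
The proof follows the classical three-step scheme: tightness, identification of limit points, and uniqueness of the limit equation.

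\textbf{Step 1 (tightness).} Proposition~\ref{tightness} gives tightness of $(\mathbb P^L_\varphi)_L$ for each fixed $\varphi\in C_b([0,1]\times\mathbb N_0)$. I will apply it along a countable convergence-determining family $\{\varphi_j\}$; for instance, products of polynomials in $u$ with indicators $\mathbf 1_{\{k=m\}}$. The compact containment condition in $\mathcal P([0,1]\times\mathbb N_0)$ follows from Proposition~\ref{sec_mom}, since $\langle\pi^L_t,k\rangle\le\rho$ and $[0,1]$ is compact. Together these give tightness of $(\mathbb P^L)$ in $D([0,T],\mathcal P([0,1]\times\mathbb N_0))$ (Jakubowski's criterion). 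Every jump changes $\langle\pi^L,\varphi\rangle$ by at most $4\|\varphi\|_\infty/L$, so every limit point is supported on continuous paths.

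\textbf{Step 2 (martingale decomposition).} For each $\varphi$ write
\[
M^{L,\varphi}_t=\langle\pi^L_t,\varphi\rangle-\langle\pi^L_0,\varphi\rangle-\int_0^t\mathcal L_L\langle\pi^L_s,\varphi\rangle\,ds .
\]
Its quadratic variation is bounded by $\frac{C}{L}\int_0^t\frac1L\sum_{x,y}A^{(L)}_{xy}c(\eta_x,\eta_y)\,ds$. By Assumption~\ref{assump2}, this is at most $\frac{C\,C_WC_c}{L}\int_0^t\langle\pi^L_s,k\rangle(1+\langle\pi^L_s,k\rangle)\,ds\le C_T/L$. Doob's inequality then gives $\sup_{t\le T}|M^{L,\varphi}_t|\to0$ in probability.

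\textbf{Step 3 (identification; the main obstacle).} We must pass to the limit in the drift \eqref{eq:generator_empirical}. Two difficulties appear: $W_L$ and $W$ are merely measurable, and $c$ is unbounded.
\begin{itemize}
\item Unbounded rates: truncate $c$ to $c\,\mathbf 1_{\{k,\ell\le K\}}$. The error is controlled by $K^{-1}$ times second moments via $k(1+\ell)\mathbf 1_{\{k>K\}}\le k^2(1+\ell)/K$, uniformly in $L$ by Proposition~\ref{sec_mom}. For the limit one uses the limit moment bound obtained by Fatou, which also yields $\int_0^T\langle\pi_t,k^2\rangle\,dt<\infty$.
\item Replacing $W_L$ by $W$: for fixed $k,\ell\le K$, the integrand is $W_L(u,v)$ times $f(u)g(v)$, where $f(u)=\sum_x\mathbf 1_{I_x}(u)\mathbf 1_{\{\eta_x=k\}}\,[\ldots]$ and $g$ is similar. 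These are step functions bounded by $\|\varphi\|_\infty$, up to replacing $\varphi(u_x,\cdot)$ by $\varphi(u,\cdot)$, which costs a modulus-of-continuity error. Hence the difference is at most $C\|W_L-W\|_\square$ by \eqref{eq:cut_norm}, summed over finitely many $(k,\ell)$.
\item Passing to the limit in $\pi^L\to\pi$: approximate $W$ in $L^1([0,1]^2)$ by a continuous $\tilde W$. The error terms are $\iint|W-\tilde W|\,\pi^L_s(du,\cdot)\pi^L_s(dv,\cdot)$. Since the $u$-marginal of $\pi^L$ is the uniform measure on the grid $\{u_x\}$, this error is bounded by $\|W-\tilde W\|_{L^1}$ plus a vanishing discretization term. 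The cleanest route is to do the previous cut-norm step with the measure $\frac1L\sum\delta_{u_x}$ replaced by Lebesgue measure, which is exact for step functions. The limit $\pi$ has $u$-marginal Lebesgue (this is preserved by weak limits), so the same bound holds for $\pi$.
\item With $\tilde W$ continuous and everything truncated, the functional is weakly continuous and the limit follows.
\end{itemize}
Letting $\tilde W\to W$ and then $K\to\infty$ yields \eqref{eq:weak_limit}.

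\textbf{Step 4 (uniqueness).} Since the $u$-marginal is Lebesgue for all $t$, write $\pi_t(du,dk)=p_t(u,k)\,du$. Alternatively, following the introduction, take densities with respect to the common dominating measure $\lambda\otimes$counting. Testing \eqref{eq:weak_limit} against $\varphi(u,k)=\psi(u)\mathbf 1_{\{k=m\}}$ gives a closed system of ODEs for $p_t(u,\cdot)$ in $L^1(du;\ell^1)$. I will estimate the weighted norm $\sum_k(1+k)|p_t-\tilde p_t|$ between two solutions. The bound $c\le C_ck(1+\ell)$ together with boundedness of $W$ gives a Gronwall inequality whose coefficient is controlled by $\langle\pi_t+\tilde\pi_t,k^2\rangle$, which is integrable in time by assumption. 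This yields uniqueness; making this weighted estimate rigorous, in particular the sign manipulations that absorb the loss term, is the second delicate point. Uniqueness plus tightness gives convergence of the whole sequence. Since the limit is deterministic, convergence in law to it is equivalent to convergence in probability.
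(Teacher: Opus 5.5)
Your proposal is correct and follows essentially the same route as the paper: Aldous-type tightness with $O(1/L)$ martingale quadratic variation, and truncation in $(k,\ell)$ controlled by the second-moment bound. The cut-norm replacement of $W_L$ by $W$ is done on the block-averaged measure with Lebesgue $u$-marginal, followed by a continuous $L^1$-approximation of $W$ and the Lebesgue $u$-marginal of the limit. Uniqueness comes from the weighted norm $\sum_k(1+k)\|\mu_t^k-\nu_t^k\|_{\rm TV}$ with the sign trick that makes the loss term nonpositive; the paper takes densities with respect to $\zeta=\sum_k\mu_0^k+\sum_k\nu_0^k$, which for two solutions started from $\pi_0$ is just $2\,du$ because the equation conserves the $u$-marginal.

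Use the block/Lebesgue version in your third bullet as the actual argument, since for merely measurable $W$ the grid sum $\frac1{L^2}\sum_{x,y}|W-\tilde W|(u_x,u_y)$ is not controlled by $\|W-\tilde W\|_{L^1}$. Also, the normalization in your quadratic-variation bound should read $\frac{1}{L^2}\sum_{x,y}$.
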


Note that the limiting equation is a nonlinear graphon-dependent master equation whose
coefficients encode the heterogeneous interaction structure of the underlying
dense graph sequence.

\begin{corollary}[Kernel representation of the limit]
Assume the hypotheses of Theorem \ref{mainthm}. Then, for every \(t\ge 0\), the first marginal of \(\pi_t\) is a Lebesgue measure on \([0,1]\). Hence there exists a measurable probability kernel
\[
u\mapsto p_t(u,\cdot)\in \mathcal P(\mathbb N_0)
\]
such that
\[
\pi_t(du,\{k\})=p_t(u,k)\,du,
\qquad k\in\mathbb N_0.
\]
In particular,
\[
p_t(u,k)\ge 0,
\qquad
\sum_{k\ge 0}p_t(u,k)=1
\quad\text{for a.e. }u\in[0,1].
\]
Moreover, for all  $u\in [0,1]$, and $ k\in \N_0$, in distributional form,  the family \(p_t\) satisfies the nonlinear master equation: 
\begin{align}
\label{eq:limit-p}
\partial_t p_t(u,k)
&=
p_t(u,k+1)\,\mu_t(u,k+1)
+\mathbf 1_{\{k\ge 1\}}\,p_t(u,k-1)\,\lambda_t(u,k-1)
\nonumber\\
&\quad
- p_t(u,k)\big(\mu_t(u,k)+\lambda_t(u,k)\big),
\end{align}
where the nonlinear loss and gain rates are given by
\begin{align}
\label{eq:mu-lambda}
\mu_t(u,k)
&:=
\int_0^1 \sum_{\ell\ge0}
W(u,v)c(k,\ell)p_t(v,\ell)\,dv,
\\
\lambda_t(u,k)
&:=
\int_0^1 \sum_{\ell\ge0}
W(v,u)c(\ell,k)p_t(v,\ell)\,dv.
\end{align}
\end{corollary}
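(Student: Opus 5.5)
The plan has three steps: show the spatial marginal is Lebesgue, disintegrate to get the kernel, and test the weak equation of Theorem~\ref{mainthm} against product functions.

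\textbf{Step 1: the spatial marginal.} Take test functions in \eqref{eq:weak_limit} that do not depend on the occupation variable, $\varphi(u,k)=f(u)$ with $f\in C([0,1])$. Then
\[
G_\varphi((u,k),(v,\ell))=\bigl(f(u)-f(u)\bigr)+\bigl(f(v)-f(v)\bigr)=0.
\]
There is one point to check: the convention $\varphi(u,-1)=0$ only enters when $k=0$, and there $c(0,\ell)=0$ by \eqref{cassum}, so that term carries no weight. Hence $\langle\pi_t,f\rangle=\langle\pi_0,f\rangle$ for all $t$.

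It remains to identify the first marginal of $\pi_0$. For every $L$, the first marginal of $\pi_0^L$ is $\frac1L\sum_x\delta_{x/L}$, deterministically. This converges weakly to Lebesgue measure, since the Riemann sums of $f$ converge to $\int_0^1 f$. By \eqref{init_conv}, $\langle\pi_0^L,f\rangle\to\langle\pi_0,f\rangle$ in probability. The limit is therefore $\int_0^1 f(u)\,du$, so the first marginal of $\pi_t$ is Lebesgue for every $t$.

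\textbf{Step 2: disintegration.} Since $\pi_t$ lives on the Polish space $[0,1]\times\mathbb N_0$ with Lebesgue first marginal, the disintegration theorem gives a measurable probability kernel $u\mapsto p_t(u,\cdot)$ with $\pi_t(du,\{k\})=p_t(u,k)\,du$. Concretely, $p_t(u,k)$ is the Radon--Nikodym derivative of $A\mapsto\pi_t(A\times\{k\})$ with respect to Lebesgue measure; it is dominated by that measure, hence absolutely continuous. Nonnegativity is immediate, and $\sum_k p_t(u,k)=1$ for a.e.\ $u$ follows from Step 1. Measurability in $(t,u)$ jointly is not needed for a statement in distributional form, but could be arranged by choosing a common version.

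\textbf{Step 3: the master equation.} Test \eqref{eq:weak_limit} with $\varphi(u,k)=f(u)\mathbf 1_{\{k=j\}}$, where $f\in C([0,1])$ and $j\in\mathbb N_0$; this is bounded and continuous because $\mathbb N_0$ is discrete. Expanding $G_\varphi$ gives four terms:
\begin{enumerate}[(i)]
\item $f(u)\mathbf 1_{\{k=j+1\}}$: departure from level $j+1$;
\item $-f(u)\mathbf 1_{\{k=j\}}$: departure from level $j$;
\item $f(v)\mathbf 1_{\{\ell=j-1\}}$: arrival from level $j-1$;
\item $-f(v)\mathbf 1_{\{\ell=j\}}$: arrival at level $j$.
\end{enumerate}
Write $\pi_s(du,dk)\pi_s(dv,d\ell)=p_s(u,k)p_s(v,\ell)\,du\,dv$ and apply Fubini. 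Terms (i) and (ii) give $\int f(u)\bigl[p_s(u,j+1)\mu_s(u,j+1)-p_s(u,j)\mu_s(u,j)\bigr]du$. Renaming variables in (iii) and (iv) gives $\int f(v)\bigl[\mathbf 1_{\{j\ge1\}}p_s(v,j-1)\lambda_s(v,j-1)-p_s(v,j)\lambda_s(v,j)\bigr]dv$. Fubini is justified by absolute integrability: by \eqref{eq:lip} and $W\le\|W\|_\infty$,
\[
\iint W\,c\,\pi_s\,\pi_s\le \|W\|_\infty C_c\,\langle\pi_s,k\rangle\bigl(1+\langle\pi_s,k\rangle\bigr)\le \|W\|_\infty C_c\,\rho(1+\rho),
\]
using the first-moment bound inherited by the limit. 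Together with $\int_0^T\langle\pi_t,k^2\rangle\,dt<\infty$, this also makes $\mu_s,\lambda_s$ finite for a.e.\ $u$ and the right-hand side of \eqref{eq:limit-p} integrable in $(s,u)$.

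\textbf{Conclusion.} We obtain, for all $f\in C([0,1])$,
\[
\int f(u)p_t(u,j)\,du=\int f(u)p_0(u,j)\,du+\int_0^t\!\!\int f(u)R_s(u,j)\,du\,ds,
\]
where $R_s(u,j)$ is the right-hand side of \eqref{eq:limit-p}. This is exactly \eqref{eq:limit-p} in distributional form. The main delicate points are the integrability bounds just described and the a.e.-in-$u$ nature of the identities; the statement's ``for all $u$'' must be read in this distributional sense.
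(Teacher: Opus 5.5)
Your proof is correct. Steps 2--3 match what the paper has in mind: disintegrate $\pi_t$, then rewrite Remark~\ref{rem:marginal-form} in terms of densities.

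The genuine difference is Step 1. The paper omits the proof and refers to the projection argument in Step 4 of the identification. There, the spatial marginal of $\pi^L_t$ is the deterministic, time-constant measure $\frac1L\sum_x\delta_{x/L}$, so the continuous mapping theorem gives every limit point a Lebesgue first marginal at every time. That argument never uses the limit equation.

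You instead read the marginal off the equation itself. Testing with $\varphi=f(u)$ kills the drift, and \eqref{init_conv} is then needed only at $t=0$. You also correctly note that the $k=0$ term, which survives the convention $\varphi(u,-1)=0$, is harmless only because $c(0,\ell)=0$; the paper passes over this point in \eqref{eq:cons_marg}.

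Each route has its place. The paper's route is the one that works inside the proof of Theorem~\ref{mainthm}. There the Lebesgue marginal is needed to bound $I^{(4)}_{L,K,\varepsilon}$ before it is known that the limit solves \eqref{eq:weak_limit}, so your argument would be circular. For the corollary, where the theorem is already available, your route is cleaner and slightly more general. It shows that any weak solution whose initial spatial marginal is Lebesgue keeps that marginal for all times, not just limit points of the particle system.

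Two small remarks on Step 3. First, the bound $\langle\pi_s,k\rangle\le\rho$ that you call ``inherited by the limit'' follows from lower semicontinuity of $\mu\mapsto\langle\mu,k\rangle$ under weak convergence. This gives it for a.e.\ $s$, which is all you need. Second, for fixed $j$ the rates $\mu_s(\cdot,j)$ and $\lambda_s(\cdot,j)$ are bounded uniformly in $u$ by $\|W\|_\infty C_c(1+j)\bigl(1+\langle\pi_s,k\rangle\bigr)$. So the second-moment integrability is not actually needed to make the right-hand side of \eqref{eq:limit-p} integrable.
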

The proof of the corollary above is omitted, as the identification of the marginal of $\pi_t$ as Lebesgue measure is directly provided by \hyperref[step4]{Step 4} in Subsec~\ref{subsec:id_limit}.

\begin{remark}[Marginal formulation]\label{rem:marginal-form}
Testing \eqref{eq:weak_limit} with
$\varphi(u,k)=\psi(u)\mathbf 1_{\{k=m\}}$,
where $\psi\in C_b([0,1])$, yields
\begin{align}
&\frac{d}{dt}\int_{[0,1]} \psi(u)\,\pi_t(du,\{m\})\\
&\hspace{1.5cm}=
\sum_{\ell\ge0}\int_{[0,1]^2}
W(u,v)c(m+1,\ell)\psi(u)\,
\pi_t(du,\{m+1\})\pi_t(dv,\{\ell\})
\nonumber\\
&\hspace{1.5cm}+
\mathbf 1_{\{m\ge1\}}
\sum_{\ell\ge0}\int_{[0,1]^2}
W(v,u)c(\ell,m-1)\psi(u)\,
\pi_t(du,\{m-1\})\pi_t(dv,\{\ell\})
\nonumber\\
&\hspace{1.5cm}-
\sum_{\ell\ge0}\int_{[0,1]^2}
W(u,v)c(m,\ell)\psi(u)\,
\pi_t(du,\{m\})\pi_t(dv,\{\ell\})
\nonumber\\
&\hspace{1.5cm}-
\sum_{\ell\ge0}\int_{[0,1]^2}
W(v,u)c(\ell,m)\psi(u)\,
\pi_t(du,\{m\})\pi_t(dv,\{\ell\}).
\label{eq:marginal_form}
\end{align}
\end{remark}



\begin{remark}[Homogeneous case]\label{rem:homogeneous}
If we work on the complete graph, i.e.
\[
q_L(x,y)=\frac{1}{L-1}\mathbf 1_{\{x\neq y\}},
\qquad x,y\in V_L,
\]
and $\pi_t(du,dk)=du\,p_k(t)$, then \eqref{eq:weak_limit}
reduces to the mean-field equation
\begin{align}\label{EDG}
\frac{dp_k(t)}{dt}
&=
\sum_{\ell\ge0}c(k+1,\ell)p_\ell(t)p_{k+1}(t)
+
\sum_{\ell\ge1}c(\ell,k-1)p_\ell(t)p_{k-1}(t)
\nonumber\\
&\quad-
\left(
\sum_{\ell\ge0}c(k,\ell)p_\ell(t)
+
\sum_{\ell\ge0}c(\ell,k)p_\ell(t)
\right)p_k(t),
\qquad k\ge0.
\end{align}
\end{remark}

\subsection{Examples}\label{subsec:examples}
In this section, we provide some examples of the limiting weighted graphon $W$. 
For each $L\in\mathbb N$, we define 
$$
q_L(x,y)
:=
\frac1L
W\!\left(\frac{x}{L},\frac{y}{L}\right)
\mathbf 1_{\{x\neq y\}},
\qquad x,y\in\{1,\dots,L\}.
$$
Consequently,  the associated step kernel is given by
$$
W_L(u,v)
=
Lq_L(x,y),
\qquad
(u,v)\in I_x\times I_y.
$$
In all examples below, by construction, $
\|W_L-W\|_{\square}\longrightarrow0
$.
\begin{example}[Homogeneous mean-field interaction]
The simplest and trivial case is the homogeneous one, where all spatial locations
interact with the same intensity. This is obtained by letting $
W(u,v):=1$, with
$u,v\in[0,1]$, and is treated in \cite{grosskinsky2019derivation}.
\end{example}

\begin{example}[Community formation.]
In this second example, the macroscopic interaction geometry encoded by $W$ has a community structure (see Figure~\ref{fig:two-community-graphon}).
Fix $0<a<b<1$. Let $\chi\in C([0,1];[0,1])$ satisfy
$$
\chi(u)=1 \quad \text{for } u\in[0,a],
\qquad
\chi(u)=0 \quad \text{for } u\in[b,1].
$$
Define the continuous kernel
$$
W(u,v)
:=
\gamma
+
(\alpha-\gamma)\chi(u)\chi(v)
+
(\beta-\gamma)(1-\chi(u))(1-\chi(v))
$$
with parameters
$$
\alpha,\beta,\gamma>0,
\qquad
\gamma<\min\{\alpha,\beta\}.
$$
\end{example}

\begin{figure}[ht]
\centering
\includegraphics[width=\textwidth]{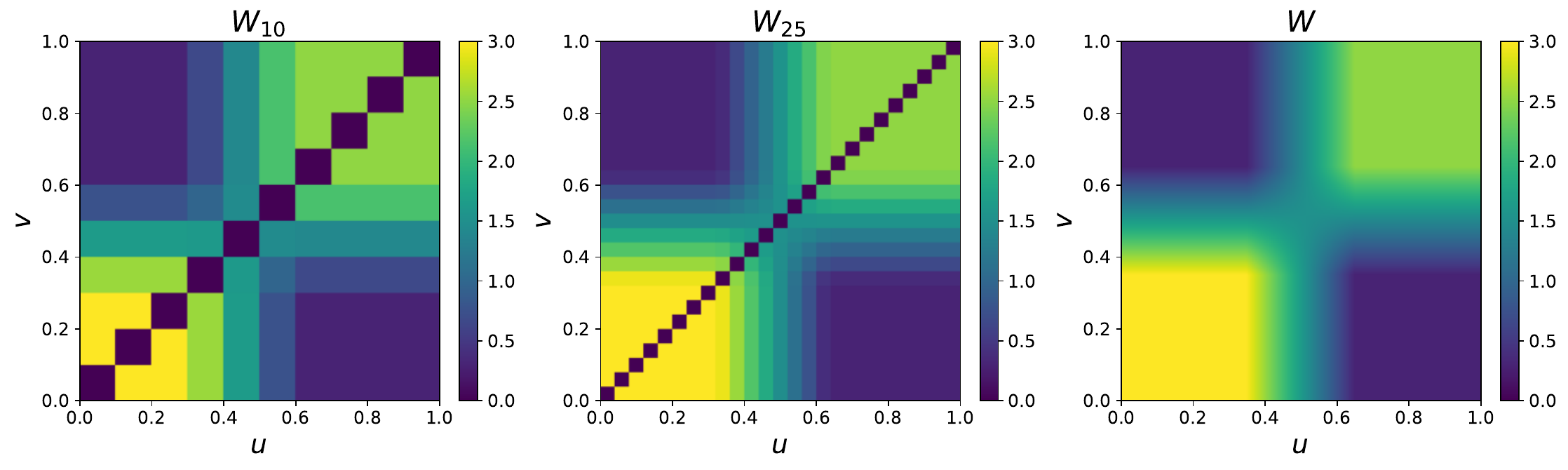}
\caption{
From left to right: the step graphons $W_{10}$ and $W_{25}$,
followed by the limiting continuous graphon $W$. The parameters used for the cut-off $\chi$ are
$a=0.35$ and
$b=0.65$. Additionally we used
$\alpha=3$,
$\beta=2.5$, and
$\gamma=0.3$.
As $L\to\infty$, the step kernels converge to the limiting graphon,
preserving the underlying community structure.
}
\label{fig:two-community-graphon}
\end{figure}

\begin{example}[Core-periphery]
In this third example the spatial variable creates regions with different macroscopic
connectivity levels: a highly interacting core and a weakly interacting
periphery.
Let $r\in C([0,1];(0,\infty))$ be a decreasing function. For instance, one may take
$$
r(u):=1+\theta(1-u),
\qquad \theta>0.
$$
Fix $a,b>0$, and define
$$
W(u,v):=a+b\,r(u)r(v),
\qquad u,v\in[0,1].
$$
Since $r$ is decreasing, the interaction kernel $W(u,v)$ is larger when
$u$ is close to $0$. Consequently, the left part of the interval
acts as a highly connected core, while the right part behaves as a weakly
interacting periphery (see Figure~\ref{fig:core-periphery-graphon}).
\end{example}
\begin{example}
(Directed interaction). In this example the interaction structure is asymmetric and induces a preferred direction of particle transfer. Fix parameters \(a,b>0\), and define
\[
W(u,v):=a+b\,\mathbf{1}_{\{u<v\}}, \qquad u,v\in[0,1].
\]
The resulting graphon is non-symmetric, since
\[
W(u,v)\neq W(v,u)
\]
whenever \(u\neq v\). Interactions from sites with smaller spatial labels toward sites with larger labels occur with higher intensity than in the opposite direction. Consequently, the limiting dynamics exhibits a macroscopic directional bias from left to right (see Fig.~\ref{fig:directed-graphon}).
\end{example}
\begin{figure}[ht]
\centering
\includegraphics[width=\textwidth]{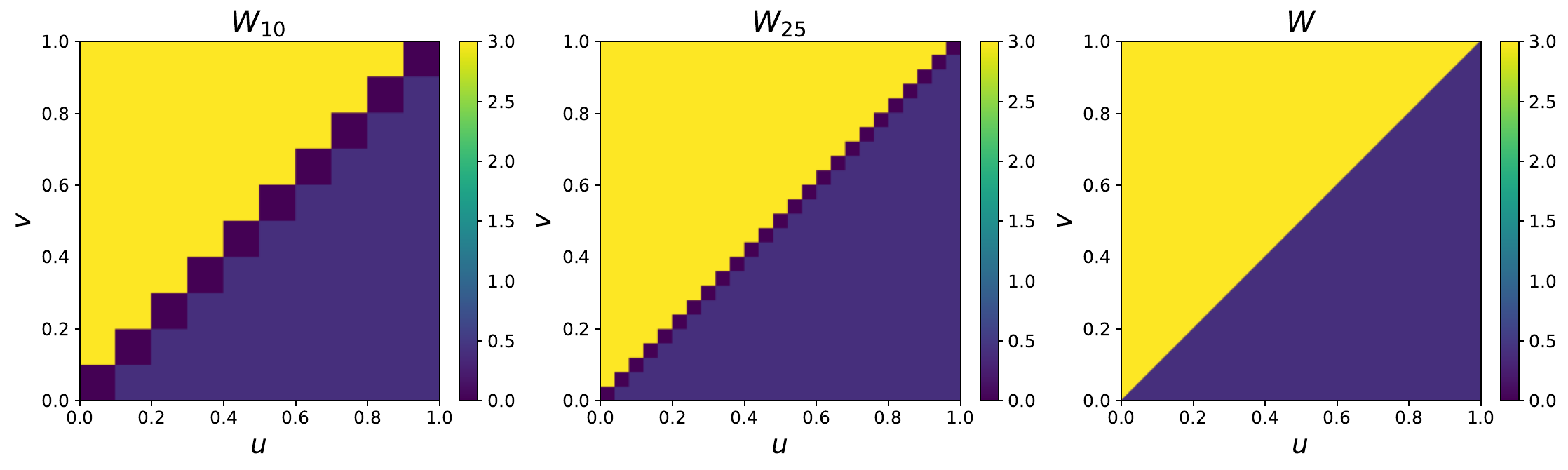}
\caption{
Directed interaction kernels with parameters $
a=0.3$,
$b=0.7$.
From left to right: the step graphons $W_{10}$ and $W_{25}$,
followed by the limiting continuous graphon $W$. The graphon assigns a larger interaction intensity to ordered pairs
\((u,v)\) with \(u<v\) than to pairs with \(u>v\).
}
\label{fig:directed-graphon}
\end{figure}

\begin{figure}[ht]
\centering
\includegraphics[width=\textwidth]{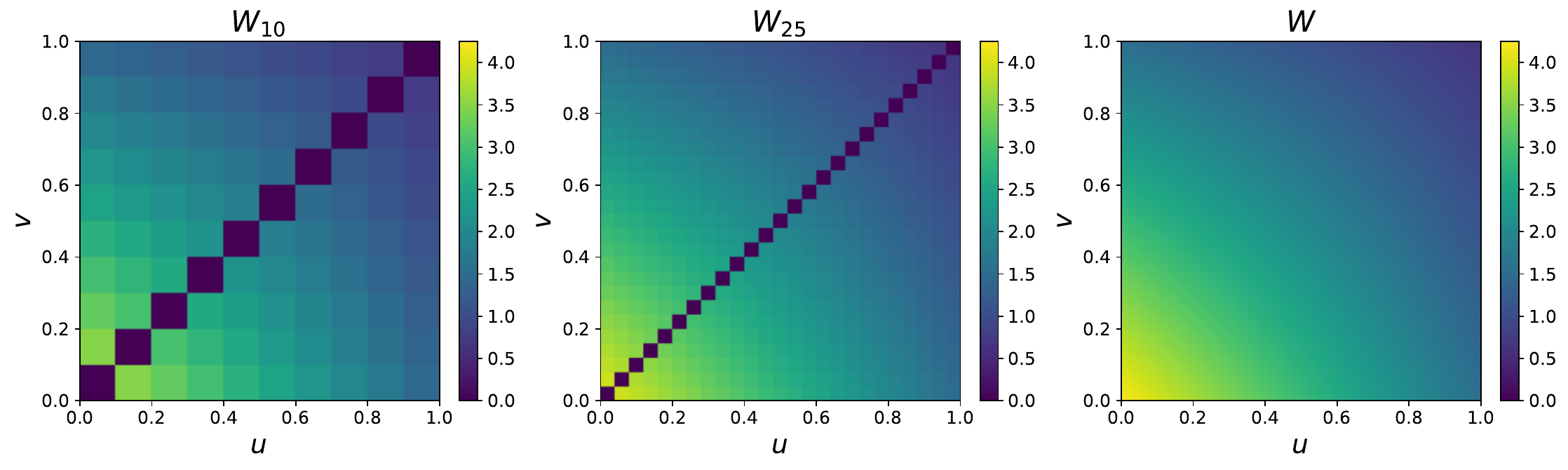}
\caption{
Core--periphery interaction kernels with parameters $
a=0.2$,
$b=0.45$, and
$\theta=2$.
From left to right: the step graphons $W_{10}$ and $W_{25}$,
followed by the limiting continuous graphon $W$.
The region near $u=0$ represents the core and has stronger interactions,
while the region near $u=1$ represents the periphery.
}
\label{fig:core-periphery-graphon}
\end{figure}

\section{Proofs}\label{Sec3:proofs}

\subsection{Second moment bound}
\begin{proof}[Proof of Prop.~\ref{sec_mom}]
For \(x\in V_L\), choosing \(g(\eta)=\eta_x^2\) in the generator
\eqref{eq:GenMis}, we obtain
\[
\mathcal L_L \eta_x^2
=
\sum_{y\in V_L}
\Big(
q_L(y,x)c(\eta_y,\eta_x)(1+2\eta_x)
+
q_L(x,y)c(\eta_x,\eta_y)(1-2\eta_x)
\Big).
\]
Using the bounds \(q_L(x,y)\le C_W/L\) (recall \eqref{eq:C_W}),
\(c(k,\ell)\le C_c\,k(1+\ell)\), and \(1-2\eta_x\le 1\), we get
\[
\begin{aligned}
\frac{d}{dt}m_2^L(t)
&=
\frac1L\sum_{x\in V_L}
\E^L[\mathcal L_L\eta_x^2(t)]
\\
&\le
\frac{C_WC_c}{L^2}
\sum_{x,y\in V_L}
\E^L\!\Big[
\eta_y(t)\bigl(1+\eta_x(t)\bigr)
\bigl(1+2\eta_x(t)\bigr)
+
\eta_x(t)\bigl(1+\eta_y(t)\bigr)
\Big]\\
&=2C_WC_c
\frac1{L^2}
\sum_{x,y\in V_L}
\E^L\!\Big[
\eta_x(t)\bigl(1+2\eta_y(t)+\eta_y^2(t)\bigr)
\Big]
\end{aligned}
\]
Using conservation of the total mass,
$
\frac1L\sum_{x\in V_L}\eta_x(t)
=
\frac1L\sum_{x\in V_L}\eta_x(0)
\le \rho ,
$
we deduce
\[
\begin{aligned}
\frac{d}{dt}m_2^L(t)
&\le
2C_WC_c
\E^L\!\left[
\left(\frac1L\sum_{x\in V_L}\eta_x(t)\right)
\left(
1+
2\frac1L\sum_{y\in V_L}\eta_y(t)
+
\frac1L\sum_{y\in V_L}\eta_y^2(t)
\right)
\right]
\\
&\le
2C_WC_c\,\rho
\left(
1+2\rho+m_2^L(t)
\right)
\\
&\le
\bar C\bigl(1+m_2^L(t)\bigr),
\end{aligned}
\]
for a suitable constant \(\bar C>0\). Hence
$
\frac{d}{dt}\bigl(1+m_2^L(t)\bigr)
\le
\bar C\,\bigl(1+m_2^L(t)\bigr),
$
and Gronwall's lemma yields
\[
m_2^L(t)
\le
\bigl(1+m_2^L(0)\bigr)e^{\bar C t}
\qquad\quad\mbox{for all }t\geq 0\mbox{ and }L\geq 1.
\]
\end{proof}

\subsection{Tightness}\label{subsec:tightness}

\begin{proof}[Proof of Prop.~\ref{tightness}]
Fix such a test function \(\varphi\in C_b([0,1]\times\mathbb N_0)\), and define
\begin{equation}\label{eq:Def:Phi}
\Phi_t^L
:=
\langle \pi_t^L,\varphi\rangle
=
\int_{[0,1]\times\mathbb N_0}
\varphi(u,k)\,\pi_t^L(du,dk).
\end{equation}
To establish tightness of $
(\mathbb P_\varphi^L)_{L\geq 1},
$ we use a version of Aldous's criterion (see Theorem 16.10 in \cite{billingsley2013convergence}).
We show that, for every \(t\ge0\),
\begin{equation}
\label{eq:tight1}
\lim_{a\to\infty}
\limsup_{L\to\infty}
\mathbb P^L\bigl(|\Phi_t^L|\ge a\bigr)
=0,
\end{equation}
and that for every \(\varepsilon>0\) and \(t>0\),
\begin{equation}
\label{eq:tightness}
\lim_{\delta_0\downarrow0}
\limsup_{L\to\infty}
\sup_{0<\delta\le\delta_0}
\sup_{\tau\in\mathfrak T_t}
\mathbb P^L
\bigl(
|\Phi_{\tau+\delta}^L-\Phi_\tau^L|>\varepsilon
\bigr)
=0,
\end{equation}
where \(\mathfrak T_t\) denotes the set of stopping times bounded by \(t\).

Since $\pi_t^L$ is a probability measure and
$\varphi$ is bounded, we immediately get $|\Phi_t^L|\leq \|\varphi\|_\infty$
for all \(t\ge0\) and \(L\ge1\). Hence \eqref{eq:tight1} follows
 by Markov's inequality,
\[
\mathbb P^L\bigl(|\Phi_t^L|\ge a\bigr)
\le
\frac{\mathbb E^L\bigl[|\Phi_t^L|\bigr]}{a}
\le
\frac{\|\varphi\|_\infty}{a},
\]
where $\mathbb{E}_{\varphi}^L$ denotes the expectation w.r.t. $\mathbb{P}_\varphi^L$.
Now, to prove \eqref{eq:tightness}, fix \(\delta_0>0\), \(\tau\in\mathfrak T_t\), and let
\(0<\delta\le \delta_0\).
Since \((\pi_s^L)_{s\ge0}\) is a Markov process and
\(\langle \pi,\varphi\rangle\) is bounded and measurable,
Dynkin's formula yields
\begin{equation}
\label{eq:itoetaMIM}
\Phi_{\tau+\delta}^L-\Phi_\tau^L
=
\int_\tau^{\tau+\delta}
(\mathcal L_L\Phi^L)(\pi_s^L)\,ds
+
M_\Phi^L(\tau+\delta)-M_\Phi^L(\tau),
\end{equation}
where \((M_\Phi^L(s))_{s\ge0}\) is a martingale whose quadratic variation is obtained by integrating the carré du champ operator:
\begin{equation}
\label{eq:mart}
[M_\Phi^L](t)
=
\int_0^t
\Big(
\mathcal L_L (\Phi^L)^2
-
2\Phi^L\,\mathcal L_L\Phi^L
\Big)(\pi_s^L)\,ds .
\end{equation}
To compute the drift, it is convenient to introduce the notation 
\[
F_\varphi^L(\eta)
:=
\langle \pi^L(\eta),\varphi\rangle
=
\frac1L\sum_{x\in V_L}\varphi(u_x^L,\eta_x).
\]
Then,
\[
\begin{aligned}
(\mathcal L_L F_\varphi^L)(\eta)
&=
\frac1L
\sum_{x,y\in V_L}
q_L(x,y)c(\eta_x,\eta_y)
\\
&\qquad\qquad\times
\Big[
\varphi(u_x^L,\eta_x-1)
+\varphi(u_y^L,\eta_y+1)
-\varphi(u_x^L,\eta_x)
-\varphi(u_y^L,\eta_y)
\Big],
\end{aligned}
\]
where again we used the convention that $\varphi(u,-1)=0$. 
Thus, for all \(\eta\in E_{L,N}\),
\begin{equation}\label{eq:absv_gen}
\begin{aligned}
\big|(\mathcal L_L F_\varphi^L)(\eta)\big|
&\overset{\eqref{eq:C_W},\eqref{eq:lip}}{\le}
4C_cC_W\|\varphi\|_\infty
\frac1{L^2}
\sum_{x,y\in V_L}\eta_x(1+\eta_y)
\\
&=
4C_cC_W\|\varphi\|_\infty
\left(\frac1L\sum_{x\in V_L}\eta_x\right)
\left(1+\frac1L\sum_{y\in V_L}\eta_y\right)
\\
&\le
4C_cC_W\|\varphi\|_\infty\,\rho(1+\rho),
\end{aligned}
\end{equation}
where \(N=N_L=\sum_{x\in V_L}\eta_x\) is the (constant) number of particles and \(N/L\le\rho\).
In order to prove \eqref{eq:tightness}, we again aim to use Markov's inequality. It is therefore enough to produce a bound on
$
\E^L\Big[
\big|\Phi_{\tau+\delta}^L-\Phi_\tau^L\big|
\Big]$
which is uniform in $L$, $\tau\in\mathfrak T_t$, and $0<\delta\le\delta_0$.
Taking the absolute value in  \eqref{eq:itoetaMIM}, for \(0<\delta\le \delta_0\), we obtain
\begin{align}
\label{toboundMim}
\E^L\Big[
\big|\Phi_{\tau+\delta}^L-\Phi_\tau^L\big|
\Big]
&\le
\E^L\left[
\int_\tau^{\tau+\delta}
\big|(\mathcal L_LF_\varphi^L)(\eta(s))\big|\,ds
\right]
\nonumber\\
&\quad+
\E^L\left[
\big|M_\Phi^L(\tau+\delta)-M_\Phi^L(\tau)\big|
\right]
\nonumber\\
&\overset{\eqref{eq:absv_gen}}{\le}
4C_cC_W\|\varphi\|_\infty \rho(1+\rho)\,\delta_0
\nonumber\\
&\quad+
\left(
\E^L\left[
[M_\Phi^L](\tau+\delta)-[M_\Phi^L](\tau)
\right]
\right)^{1/2}.
\end{align}
For the second term, we used Cauchy--Schwarz inequality together with the identity
$
\E^L\!\left[
\bigl(M_\Phi^L(\tau+\delta)-M_\Phi^L(\tau)\bigr)^2
\right]
=
\E^L\!\left[
[M_\Phi^L](\tau+\delta)-[M_\Phi^L](\tau)
\right]$,
which follows from the optional stopping theorem applied to the martingale
$
(M_\Phi^L(t))^2-[M_\Phi^L](t).
$
Then, to control the last term in \eqref{toboundMim}, it remains to bound
the carré du champ operator. For all \(\eta\in E_{L,N}\), we have
\begin{align}
&\Big(
\mathcal L_L(F_\varphi^L)^2
-
2F_\varphi^L\,\mathcal L_LF_\varphi^L
\Big)(\eta)
\nonumber\\
&=
\frac1{L^2}
\sum_{x,y\in V_L}
q_L(x,y)c(\eta_x,\eta_y)
\Big[
\varphi(u_x^L,\eta_x-1)
+\varphi(u_y^L,\eta_y+1)
-\varphi(u_x^L,\eta_x)
-\varphi(u_y^L,\eta_y)
\Big]^2
\nonumber\\
&\le
16\|\varphi\|_\infty^2
\frac{C_cC_W}{L^3}
\sum_{x,y\in V_L}
\eta_x(1+\eta_y)
=
\frac{16\|\varphi\|_\infty^2C_cC_W}{L}
\left(\frac1L\sum_{x\in V_L}\eta_x\right)
\left(1+\frac1L\sum_{y\in V_L}\eta_y\right)
\nonumber\\
&\le
\frac{16\|\varphi\|_\infty^2C_cC_W}{L}
\rho(1+\rho).
\label{vanish}
\end{align}
Therefore,
\begin{align}\label{eq:stima_qv}
\E^L \Big[
[M_\Phi^L](\tau+\delta)-[M_\Phi^L](\tau)
\Big]
&\leq
\delta_0\,
\frac{16\|\varphi\|_\infty^2 C_cC_W}{L}
\,\rho(1+\rho).
\end{align}
Combining \eqref{toboundMim} and \eqref{eq:stima_qv}, we obtain
\[
\E^L\Big[
\big|\Phi_{\tau+\delta}^L-\Phi_\tau^L\big|
\Big]
\le
4C_cC_W\|\varphi\|_\infty \rho(1+\rho)\,\delta_0
+
4\|\varphi\|_\infty
\sqrt{
\frac{C_cC_W\rho(1+\rho)}{L}\,\delta_0
}.
\]
The right-hand side is uniform in
\(\tau\in\mathfrak T_t\) and \(0<\delta\le\delta_0\).
Hence, by Markov's inequality,
\[
\sup_{0<\delta\le\delta_0}
\sup_{\tau\in\mathfrak T_t}
\mathbb P^L
\Big(
|\Phi_{\tau+\delta}^L-\Phi_\tau^L|>\varepsilon
\Big)
\le
\frac{\widehat{C}}{\varepsilon}
\left(
\delta_0
+
\sqrt{\frac{\delta_0}{L}}
\right),
\]
for a constant \(\widehat{C}\equiv\widehat{C}(\varphi, C_c, C_W, \rho)<\infty\) depending only on
\(\varphi\), \(C_c\), \(C_W\), and \(\rho\).
Taking first \(\limsup_{L\to\infty}\) and then letting
\(\delta_0\downarrow0\), we obtain \eqref{eq:tightness}. This completes the proof.
\end{proof}

\subsection{Identification of the limit}\label{subsec:id_limit}
By Proposition \ref{tightness}, for every continuous bounded  test function
\(\phi : [0,1]\times\mathbb N_0 \to \mathbb R\), the sequence
\(
\bigl(\langle \pi_t^L,\phi\rangle\bigr)_{t\in[0,T]}
\)
is tight in \(D([0,T],\mathbb R)\). Since \([0,1]\times\mathbb N_0\) is Polish, it follows
that
\((\pi^L)_{L\geq 1}\) is tight in \(
D([0,T],\mathcal P([0,1]\times\mathbb N_0))
\) (see Ch.3 \cite{ethier2009markov}).  Therefore, by Prohorov's theorem, every subsequence admits a further
subsequence, still denoted by $(\pi^L)_{L\geq 1}$ converging in distribution to a limit process
\(
(\pi_t)_{t\in[0,T]},
\)
which may still be random at this stage, with law $\mathbb Q$.
By the Skorokhod's representation theorem~\cite[Theorem~IV.13, page 71]{pollard}, there exists a common probability
space $(\hat{\Omega},\hat{\mathcal{F}},\hat{\mathbb{P}})$ and stochastic processes $\hat{\pi}^{L},
\hat{\pi}
:
\hat{\Omega}
\to
D\bigl([0,T],\mathcal P([0,1]\times\mathbb N_0)\bigr)$
with laws $\mathbb{P}^{L}$ and $\mathbb{Q}$ such that 
\[
\hat{\pi}^L \longrightarrow \hat{\pi}
\qquad \hat{\mathbb{P}}-\text{almost surely in }
D([0,T],\mathcal P([0,1]\times\mathbb N_0)),
\]
with respect to the $J_1$ topology.
In the remainder of the proof we work with this realization and we omit the hats from the notation.
In particular, for almost every $\omega$ and Lebesgue-a.e.
$s\in[0,T]$,
\begin{equation}\label{eq:weak_conv_pi}
\pi_s^L(\omega)\Rightarrow \pi_s(\omega) \qquad \text{in } \mathcal P([0,1]\times\mathbb N_0).
\end{equation}
Now, the goal is to study the limit $L\to\infty$ of Eq.~\eqref{eq:itoetaMIM} (combined with the definitions in \eqref{eq:Def:Phi} and \eqref{eq:generator_empirical}):
\begin{equation}
\label{eq:weakL}
\begin{aligned} 
\langle \pi_t^L,\varphi\rangle
&=
\langle \pi_0^L,\varphi\rangle
\\
&\quad+
\int_0^t
\iint_{E^2}
W_L(u,v)c(k,\ell)
G_{\varphi}((u,k),(v,\ell))
\,\pi_s^L(du,dk)\,\pi_s^L(dv,d\ell)
\,ds
\\
&\quad+
M_\Phi^L(t).
\end{aligned}
\end{equation}

First we address the middle term in \eqref{eq:weakL} and we show that

\begin{equation}\label{eq:L1_goal}
\begin{aligned}
    \lim \limits_{L\to \infty} \E \bigg[ \int_0^t & \bigg|  \iint_{([0,1]\times\mathbb N_0)^2} W_L(u,v)c(k,\ell)G_{\varphi}((u,k),(v,\ell)) \pi^L_s(du,dk)  \pi^L_s(dv,d\ell) \nonumber \\ & -\iint W(u,v)c(k,\ell)G_{\varphi}((u,k),(v,\ell)) \pi_s(du,dk)  \pi_s(dv,d\ell) \bigg| ds\bigg] =0.
\end{aligned}
\end{equation}
We will then collect the contribution of all terms in \hyperref[step5]{Step 5}.
\paragraph{Step 1. Truncation in the occupation variables.}
Fix $K\in\mathbb N$ and define
$$
G_\varphi^K((u,k),(v,\ell))
:=
G_\varphi((u,k),(v,\ell))\,\mathbf 1_{\{k\le K,\ \ell\le K\}}.
$$
Set \(E:=[0,1]\times\mathbb N_0\). Then by triangle inequality we deduce:
\begin{equation}\label{triang:ineq}
\begin{aligned}
&\int_0^t
\left|
\iint_{E^2}
W_L(u,v)c(k,\ell)G_\varphi((u,k),(v,\ell))
\,\pi_s^L(du,dk)\pi_s^L(dv,d\ell)
\right.
\\
&\hspace{2.7cm}\left.
-
\iint_{E^2}
W(u,v)c(k,\ell)G_\varphi((u,k),(v,\ell))
\,\pi_s(du,dk)\pi_s(dv,d\ell)
\right|ds
\\
&\le
\int_0^t
\left|
\iint_{E^2}
W_L(u,v)c(k,\ell)G_\varphi^K((u,k),(v,\ell))
\,\pi_s^L(du,dk)\pi_s^L(dv,d\ell)
\right.
\\
&\hspace{2.7cm}\left.
-
\iint_{E^2}
W(u,v)c(k,\ell)G_\varphi^K((u,k),(v,\ell))
\,\pi_s(du,dk)\pi_s(dv,d\ell)
\right|ds
\\
&\quad+
\int_0^t
\left|
\iint_{E^2}
W_L(u,v)c(k,\ell)
\bigl(G_\varphi-G_\varphi^K\bigr)((u,k),(v,\ell))
\,\pi_s^L(du,dk)\pi_s^L(dv,d\ell)
\right|ds
\\
&\quad+
\int_0^t
\left|
\iint_{E^2}
W(u,v)c(k,\ell)
\bigl(G_\varphi-G_\varphi^K\bigr)((u,k),(v,\ell))
\,\pi_s(du,dk)\pi_s(dv,d\ell)
\right|ds .
\end{aligned}
\end{equation}
In the equation display above there are three terms. We discuss them separately.
\begin{itemize}
\item Passing to the expectation, for the second term in \eqref{triang:ineq} we claim that we  have:
\begin{equation}
\begin{aligned}
\label{eq:tail-prelimit}
&\lim_{K\to\infty}\sup_L\,
\mathbb E\Bigg[
\int_0^t
\Bigg|
\iint W_L(u,v)\,c(k,\ell)\,\big(G_\varphi-G_\varphi^K\big)((u,k),(v,\ell))\\
&\hspace{7cm}\times d(\pi_s^L\otimes\pi_s^L)
\Bigg|ds
\Bigg]=0.
\end{aligned}
\end{equation}
Indeed, 
$
G_\varphi-G_\varphi^K
=
G_\varphi\,\mathbf 1_{\{k>K\}\cup\{\ell>K\}},
$
and, since $\varphi$ is bounded, it holds
$
|G_\varphi-G_\varphi^K|
\le
4\|\varphi\|_\infty
\mathbf 1_{\{k>K\}\cup\{\ell>K\}}.
$
Using \eqref{eq:lip}, the boundedness of $W_L$, and
$
\mathbf 1_{\{k>K\}}
\leq \frac{k}{K}$,
$\mathbf 1_{\{\ell>K\}}
\le \frac{\ell}{K}$,
we obtain
\begin{equation}\label{eq:bound:absv}
\mathbb E\left|
\iint_{E^2} W_L\,c\,(G_\varphi-G_\varphi^K)
\,d(\pi_s^L\otimes\pi_s^L)
\right|
\le
\frac{C}{K}\bigl(1+m_2^L(s)\bigr),
\end{equation}
for a constant \(C\) independent of \(L\), \(K\), and \(s\). Therefore, by Proposition~\ref{sec_mom},
\[
\sup_L
\mathbb E\!\left[
\int_0^t
\left|
\iint_{E^2} W_L\,c\,(G_\varphi-G_\varphi^K)
\,d(\pi_s^L\otimes\pi_s^L)
\right|
ds
\right]
\le
\frac{C_t}{K},
\]
which yields \eqref{eq:tail-prelimit} by letting \(K\to\infty\).

\item 
Similarly, by Proposition~\ref{sec_mom} and Fatou's lemma, 
for the third term in \eqref{triang:ineq}
we have
\begin{equation}
\label{eq:tail-limit}
\lim_{K\to\infty}
\mathbb E\left[
\int_0^t
\left|
\iint_{E^2} W(u,v)\,c(k,\ell)\,\big(G_\varphi-G_\varphi^K\big)((u,k),(v,\ell))
\,d(\pi_s\otimes\pi_s)
\right|ds
\right]
=0.
\end{equation}
\end{itemize}
We finally discuss the limit of the expectation of the first term in \eqref{triang:ineq}.
First, we prove that for every fixed $K\in\mathbb N$,
\begin{equation}
\begin{aligned}
\label{eq:truncated-drift-goal}
&\lim_{L\to\infty}
\mathbb E\Bigg[
\int_0^t
\Bigg|
\iint_{E^2} W_L(u,v)\,c(k,\ell)\,G_\varphi^K((u,k),(v,\ell))\,
\pi_s^L(du,dk)\,\pi_s^L(dv,d\ell)\\
&-
\iint_{E^2} W(u,v)\,c(k,\ell)\,G_\varphi^K((u,k),(v,\ell))\,
\pi_s(du,dk)\,\pi_s(dv,d\ell)
\Bigg|ds
\Bigg]
=0.
\end{aligned}
\end{equation}
This is achieved at the end of \hyperref[step4]{Step 4}. Then, we pass to the limit $K\to\infty$ in \hyperref[step5]{Step 5}. To start with, we introduce some preliminaries.
\paragraph{Step 2. Block-averaged empirical measure.}
For $s\in[0,t]$ and $k\in\mathbb N_0$, define the step function
$$
p_{s,L,k}(u)
:=
\sum_{x=1}^L \mathbf 1_{I_x}(u)\,\mathbf 1_{\{\eta_x^L(s)=k\}},
\qquad u\in[0,1],
$$
which indicates whether the site corresponding to $u$ has occupation number $k$.
Then $p_{s,L,k}$ is measurable, takes values in $[0,1]$, and satisfies
$$
\sum_{k\ge0} p_{s,L,k}(u)=1
\qquad\text{for a.e. }u\in[0,1].
$$
We define the block-averaged empirical measure
\begin{equation}\label{eq:tildepi}
\widetilde \pi_s^L(du,dk)
:=
\sum_{k\ge0} p_{s,L,k}(u)\,du\,\delta_k(dk).
\end{equation}
By construction, its first marginal is the Lebesgue measure on $[0,1]$.
For a bounded measurable function $\psi:[0,1]\times\mathbb N_0\to\mathbb R$, define its
step approximation with respect to the partition $(I_x)_{x=1}^L$ by
\begin{equation}\label{def:psi_L}
\psi_L(u,k)
:=
\sum_{x=1}^L \psi(u_x,k)\,\mathbf 1_{I_x}(u).
\end{equation}
Then, for every bounded measurable $\psi$,
\begin{equation}\label{identity}
\langle \pi_s^L,\psi \rangle=\langle \widetilde\pi_s^L,\psi_L\rangle
\qquad \text{a.s.}
\end{equation}

In what follows, we will restrict to test functions
$\psi \in C_b([0,1]\times\mathbb N_0)$ with finite support in the second
variable, i.e.\ such that $\psi(\cdot,k)=0$ for $k>K$.

\paragraph{Step 2b. Control of the approximation error.}
 Using the identity \eqref{identity},
we can write
$$
\langle \widetilde\pi_s^L,\psi\rangle
-
\langle \pi_s^L,\psi\rangle
=
\langle \widetilde\pi_s^L,\psi - \psi_L\rangle \qquad \text{a.s.}
$$
Hence,
$$
\left|
\langle \widetilde\pi_s^L,\psi\rangle
-
\langle \pi_s^L,\psi\rangle
\right|
\le
\langle \widetilde\pi_s^L,\,|\psi - \psi_L|\rangle \qquad \text{a.s.}
$$
By definition of $\psi_L$, for $u\in I_x$ we have almost surely,
$$
|\psi_L(u,k)-\psi(u,k)|
=
|\psi(u_x,k)-\psi(u,k)|
\le
\sup_{\substack{u,v\in[0,1]\\ |u-v|\le 1/L}}
|\psi(u,k)-\psi(v,k)|.
$$
Since $\psi(\cdot,k)=0$ for $k>K$, taking the maximum over
$0\le k\le K$ yields
$$
\left|
\langle \widetilde\pi_s^L,\psi\rangle
-
\langle \pi_s^L,\psi\rangle
\right|
\le
\max_{0\le k\le K}
\sup_{\substack{u,v\in[0,1]\\ |u-v|\le 1/L}}
|\psi(u,k)-\psi(v,k)|,
$$
and therefore
$$
\sup_{s\in[0,t]}
\left|
\langle \widetilde\pi_s^L,\psi\rangle
-
\langle \pi_s^L,\psi\rangle
\right|
\le
\max_{0\le k\le K}\,
\sup_{\substack{u,v\in[0,1]\\ |u-v|\le 1/L}}
|\psi(u,k)-\psi(v,k)|.
$$
Since, for each $0\le k\le K$, the function $\psi(\cdot,k)$ is uniformly
continuous on $[0,1]$, the right-hand side converges to $0$ as $L\to\infty$.
Now, we already know that 
$$
\pi_s^L \Rightarrow \pi_s  \quad \text{a.s. for Lebesgue-a.e. } s\in[0,T].
$$
Moreover, for every $\psi \in C_b([0,1]\times\mathbb N_0)$ with finite support
in the second variable, we have shown that
$$
\langle \widetilde\pi_s^L,\psi\rangle
-
\langle \pi_s^L,\psi\rangle
\longrightarrow 0  \quad \text{a.s. for Lebesgue-a.e. } s\in[0,t].
$$
Therefore, for every such test function $\psi$, we can write
$$
\langle \widetilde\pi_s^L,\psi\rangle
=
\langle \pi_s^L,\psi\rangle
+
\langle \widetilde\pi_s^L - \pi_s^L,\psi\rangle \qquad \text{almost surely},
$$
and passing to the limit as $L\to\infty$, 
we get
\begin{equation}\label{eq:weakconv_prob}
\langle \widetilde\pi_s^L,\psi\rangle
\longrightarrow
\langle \pi_s,\psi\rangle  \quad \text{a.s. for Lebesgue-a.e. } s\in[0,T].
\end{equation}
for every $\psi \in C_b([0,1]\times\mathbb N_0)$ with finite support in $k$. This property will be important for the estimate of the term $I_{L,K,\varepsilon}^{(3)}(s)$ in \hyperref[step4]{Step 4} below. 
\paragraph{Step 3. Replacement of $W_L$ by $W$.}
We now start proving the limit in \eqref{eq:truncated-drift-goal}. For fixed $K$, we observe that, since both $W_L$ and $\varphi_L$ are constant on each block $I_x\times I_y$,
we have the identity
\begin{align*}
&\iint_{E^2} W_L(u,v)\,c(k,\ell)\,G_\varphi^K((u,k),(v,\ell))
\,\pi_s^L(du,dk)\,\pi_s^L(dv,d\ell)
=\\
&\hspace{2cm}\iint_{E^2} W_L(u,v)\,c(k,\ell)\,G_{\varphi_L}^K((u,k),(v,\ell))
\,\widetilde\pi_s^L(du,dk)\,\widetilde\pi_s^L(dv,d\ell).
\end{align*}
Therefore, we can go back \eqref{eq:truncated-drift-goal} with $\varphi$ replaced by $\varphi_L$ (defined as in \eqref{def:psi_L}) and consider
$$
\mathcal A_{L,K}(s)
:=
\iint_{E^2} (W_L-W)(u,v)\,c(k,\ell)\,G_{\varphi_L}^K((u,k),(v,\ell))
\,\widetilde\pi_s^L(du,dk)\,\widetilde\pi_s^L(dv,d\ell).
$$
Recalling that
\begin{equation}\label{eq:tildepi-reminder}
\widetilde \pi_s^L(du,dk)
=
\sum_{m\ge0} p_{s,L,m}(u)\,du\,\delta_m(dk),
\end{equation}
we obtain
\begin{align*}
\mathcal A_{L,K}(s)
&=
\sum_{k,\ell\ge0}
\iint_{E^2} (W_L-W)(u,v)\,c(k,\ell)\,G_{\varphi_L}^K((u,k),(v,\ell))
\,p_{s,L,k}(u)p_{s,L,\ell}(v)\,du\,dv .
\end{align*}
Since
$$
G_{\varphi_L}^K((u,k),(v,\ell))
=
G_{\varphi_L}((u,k),(v,\ell))\,\mathbf 1_{\{k\le K,\;\ell\le K\}},
$$
only the indices $k,\ell\le K$ contribute, and therefore
$$
\mathcal A_{L,K}(s)
=
\sum_{k,\ell\le K}
\iint_{E^2} (W_L-W)(u,v)\,c(k,\ell)\,G_{\varphi_L}((u,k),(v,\ell))
\,p_{s,L,k}(u)p_{s,L,\ell}(v)\,du\,dv .
$$
Now, by definition of $G_{\varphi_L}$,
\begin{align}\label{eq:G_phiL_reminder}
G_{\varphi_L}((u,k),(v,\ell))
&=
\varphi_L(u,k-1)-\varphi_L(u,k)
+
\varphi_L(v,\ell+1)-\varphi_L(v,\ell).
\end{align}
Hence
\begin{align*}
\mathcal A_{L,K}(s)
&=
\sum_{k,\ell\le K}
\iint_{E^2} (W_L-W)(u,v)\,c(k,\ell)
\Big(
\varphi_L(u,k-1)-\varphi_L(u,k)
+
\varphi_L(v,\ell+1)-\varphi_L(v,\ell)
\Big)
\\
&\hspace{5cm}\times
p_{s,L,k}(u)p_{s,L,\ell}(v)\,du\,dv .
\end{align*}
We now introduce the abbreviations
$$
a_{L,k}(u):=\varphi_L(u,k-1)-\varphi_L(u,k),
\qquad
b_{L,\ell}(v):=\varphi_L(v,\ell+1)-\varphi_L(v,\ell),
$$
so that the previous expression becomes
$$
\mathcal A_{L,K}(s)
=
\sum_{k,\ell\le K}
\iint_{E^2} (W_L-W)(u,v)\,
c(k,\ell)\big(a_{L,k}(u)+b_{L,\ell}(v)\big)
p_{s,L,k}(u)p_{s,L,\ell}(v)\,du\,dv .
$$
To estimate this quantity, we split the sum into the two corresponding terms:
\begin{equation}\label{eq:A_L}
\begin{aligned}
\mathcal A_{L,K}(s)
&=
\sum_{k,\ell\le K}
c(k,\ell)
\iint_{E^2} (W_L-W)(u,v)\,
a_{L,k}(u)\,p_{s,L,k}(u)p_{s,L,\ell}(v)\,du\,dv
\\
&\quad+
\sum_{k,\ell\le K}
c(k,\ell)
\iint_{E^2} (W_L-W)(u,v)\,
b_{L,\ell}(v)\,p_{s,L,k}(u)p_{s,L,\ell}(v)\,du\,dv .
\end{aligned}
\end{equation}
Since
$$
|a_{L,k}(u)|\le 2\|\varphi\|_\infty,
\qquad
|b_{L,\ell}(v)|\le 2\|\varphi\|_\infty,
\qquad
0\le p_{s,L,k}(u),\,p_{s,L,\ell}(v)\le 1,
$$
the functions
$$
u\mapsto \frac{a_{L,k}(u)}{2\|\varphi\|_\infty}\,p_{s,L,k}(u),
\qquad
v\mapsto p_{s,L,\ell}(v),
$$
and
$$
u\mapsto p_{s,L,k}(u),
\qquad
v\mapsto \frac{b_{L,\ell}(v)}{2\|\varphi\|_\infty}\,p_{s,L,\ell}(v),
$$
take values in $[-1,1]$. Therefore, by \eqref{eq:cut_norm}
each of the two integrals in \eqref{eq:A_L} is bounded by
$
2\|\varphi\|_\infty\,\|W_L-W\|_\square .
$
Summing the two contributions, we obtain
$$
|\mathcal A_{L,K}(s)|
\le
4\|\varphi\|_\infty
\sum_{k,\ell\le K} c(k,\ell)\,\|W_L-W\|_\square .
$$
Since $K$ is fixed and $\|W_L-W\|_\square\to0$, it follows that
\begin{equation}\label{eq:limA}
\sup_{s\in[0,t]} |\mathcal A_{L,K}(s)|
\longrightarrow 0
\qquad\text{as }L\to\infty.
\end{equation}
By adding and subtracting the term
$
\iint_{E^2} W\,c\,G_{\varphi_L}^K\,d(\widetilde\pi_s^L\otimes\widetilde\pi_s^L)
$
in \eqref{eq:truncated-drift-goal} and applying the triangle inequality, thanks to \eqref{eq:limA}, we are left with proving 
\begin{equation}\label{eq:goal_intermed}
\lim_{L\to\infty}
\mathbb E\left[
\int_0^t
\left|
\iint_{E^2} W\,c\,G_{\varphi_L}^K\,d(\widetilde\pi_s^L\otimes\widetilde\pi_s^L)
-
\iint W\,c\,G_\varphi^K\,d(\pi_s\otimes\pi_s)
\right|ds
\right]
=0.
\end{equation}

\phantomsection
\paragraph{Step 4. Proof of \eqref{eq:goal_intermed}}
\label{step4}
 Observe that the map
$
\mu \mapsto \iint_{E^2} W\,c\,G_\varphi^K\,d(\mu\otimes\mu)
$
is not continuous with respect to weak convergence of measures, since $W$
is only assumed to be bounded and measurable. To overcome this, we approximate
$W$ by a continuous function.
Let $\varepsilon>0$. Since $W\in L^\infty([0,1]^2)$ and $[0,1]^2$ has finite
measure, we have $W\in L^1([0,1]^2)$. By the density of continuous functions
in $L^1([0,1]^2)$ \cite[Proposition~7.9]{Folland}, there exists $W^\varepsilon\in C([0,1]^2)$ such that
\begin{equation}\label{eq:boundW}
\|W^\varepsilon - W\|_{L^1([0,1]^2)} < \varepsilon.
\end{equation}
Define the auxiliary functional
$$
F_{K,\varepsilon}(\mu)
:=
\iint_{E^2} W^\varepsilon(u,v)\,c(k,\ell)\,G_\varphi^K((u,k),(v,\ell))
\,\mu(du,dk)\,\mu(dv,d\ell).
$$
Since $W^\varepsilon$ is bounded and continuous, and the truncation restricts
$k,\ell$ to a finite set, the map
$
F_{K,\varepsilon}:\mathcal P([0,1]\times\mathbb N_0)\to\mathbb R
$
is bounded and continuous.
Set
$$
\mathcal B_{L,K}(s)
:=
\iint_{E^2} W(u,v)\,c(k,\ell)\,G_{\varphi_L}^K((u,k),(v,\ell))
\,\widetilde\pi_s^L(du,dk)\,\widetilde\pi_s^L(dv,d\ell),
$$
and
$$
\mathcal B_K(s)
:=
\iint_{E^2} W(u,v)\,c(k,\ell)\,G_{\varphi}^K((u,k),(v,\ell))
\,\pi_s(du,dk)\,\pi_s(dv,d\ell).
$$
We claim that
\begin{equation}\label{goal:BL1}
\int_0^t |\mathcal B_{L,K}(s)-\mathcal B_K(s)|\,ds
\overset{L\to\infty}{\longrightarrow} 0
\qquad\text{in }L^1.
\end{equation}
Now we introduce the bound and  decomposition:
$$
|\mathcal B_{L,K}(s)-\mathcal B_K(s)|
\le I_{L,K,\varepsilon}^{(1)}(s)
+ I_{L,K,\varepsilon}^{(2)}(s)
+ I_{L,K,\varepsilon}^{(3)}(s)
+ I_{L,K,\varepsilon}^{(4)}(s) \qquad \text{a.s.}
$$
where
\begin{align*}
I_{L,K,\varepsilon}^{(1)}(s)
&:=
\left|
\iint_{E^2} (W-W^\varepsilon)(u,v)\,c(k,\ell)\,G_{\varphi_L}^K((u,k),(v,\ell))
\,d(\widetilde\pi_s^L\otimes\widetilde\pi_s^L)
\right|,
\\
I_{L,K,\varepsilon}^{(2)}(s)
&:=
\left|
\iint_{E^2} W^\varepsilon(u,v)\,c(k,\ell)\,
\big(G_{\varphi_L}^K-G_\varphi^K\big)((u,k),(v,\ell))
\,d(\widetilde\pi_s^L\otimes\widetilde\pi_s^L)
\right|,
\\
I_{L,K,\varepsilon}^{(3)}(s)
&:=
\left|
F_{K,\varepsilon}(\widetilde\pi_s^L)-F_{K,\varepsilon}(\pi_s)
\right|,
\\
I_{L,K,\varepsilon}^{(4)}(s)
&:=
\left|
\iint_{E^2} (W^\varepsilon-W)(u,v)\,c(k,\ell)\,G_{\varphi}^K((u,k),(v,\ell))
\,d(\pi_s\otimes\pi_s)
\right|.
\end{align*}

We estimate the four terms separately. All the bounds below  hold almost surely.\\
\smallskip
\emph{Estimate of $I_{L,K,\varepsilon}^{(1)}(s)$.}
Recall that
$$
\widetilde\pi_s^L(du,dk)
=
\sum_{m\ge 0}p_{s,L,m}(u)\,du\,\delta_m(dk).
$$
Therefore,
$$
(\widetilde\pi_s^L\otimes \widetilde\pi_s^L)(du,dk,dv,d\ell)
=
\sum_{m,n\ge 0}
p_{s,L,m}(u)p_{s,L,n}(v)\,
du\,dv\,\delta_m(dk)\,\delta_n(d\ell).
$$
Hence
\begin{align*}
I_{L,K,\varepsilon}^{(1)}(s)
&=
\left|
\sum_{k,\ell\ge0}
\iint_{[0,1]^2}
(W-W^\varepsilon)(u,v)\,c(k,\ell)
G_{\varphi_L}^K((u,k),(v,\ell))
p_{s,L,k}(u)p_{s,L,\ell}(v)
\,du\,dv
\right| .
\end{align*}
Since $G_{\varphi_L}^K$ vanishes unless $k,\ell\le K$, this becomes
\begin{align*}
I_{L,K,\varepsilon}^{(1)}(s)
&\le
\sum_{k,\ell\le K}
\iint_{[0,1]^2}
|W-W^\varepsilon|(u,v)\,c(k,\ell)
\left|G_{\varphi_L}^K((u,k),(v,\ell))\right|
p_{s,L,k}(u)p_{s,L,\ell}(v)
\,du\,dv .
\end{align*}
Using
$$
c(k,\ell)|G_{\varphi_L}^K((u,k),(v,\ell))|
\le 4\|\varphi\|_\infty\, c(k,\ell)\,\mathbf 1_{\{k\le K,\ \ell\le K\}}
\overset{\eqref{eq:lip}}{\le} 4\|\varphi\|_\infty C_c K(1+K)\,\mathbf 1_{\{k\le K,\ \ell\le K\}},
$$
we get
\begin{align*}
I_{L,K,\varepsilon}^{(1)}(s)
&\le
4\|\varphi\|_\infty C_cK(1+K)
\iint_{[0,1]^2}
|W-W^\varepsilon|(u,v)
\sum_{k,\ell\le K}
p_{s,L,k}(u)p_{s,L,\ell}(v)
\,du\,dv .
\end{align*}
Finally,
$$
\sum_{k,\ell\le K}p_{s,L,k}(u)p_{s,L,\ell}(v)
\le
\left(\sum_{k\ge0}p_{s,L,k}(u)\right)
\left(\sum_{\ell\ge0}p_{s,L,\ell}(v)\right)
=1,
$$
and therefore
$$
I_{L,K,\varepsilon}^{(1)}(s)
\le
4\|\varphi\|_\infty\,C_cK(1+K)\,\|W-W^\varepsilon\|_{L^1([0,1]^2)}
\overset{\eqref{eq:boundW}}{\le}
C_{K,\varphi}\,\varepsilon.
$$

\smallskip
\emph{Estimate of $I_{L,K,\varepsilon}^{(2)}(s)$.}
From \eqref{eq:G_phiL_reminder}, combined with the fact that $k,\ell \leq K$ (and therefore the involved indexes can be described by some $m\in\{0,\dots, K+1\}$), we obtain:
$$
\big|G_{\varphi_L}^K - G_\varphi^K\big|
\le
C_K \max_{0\le m\le K+1}
\big|\varphi_L(\cdot,m)-\varphi(\cdot,m)\big|.
$$
Now, since $W^\varepsilon$ is bounded, we get
\begin{equation}\label{eq:I2_lim}
I_{L,K,\varepsilon}^{(2)}(s)
\le
C_{K,\varphi,\varepsilon}
\max_{0\le m\le K+1}\|\varphi_L(\cdot,m)-\varphi(\cdot,m)\|_\infty,
\end{equation}
where $C_{K,\varphi,\varepsilon}$ also incorporates the bound \eqref{eq:boundW}.
As $\varphi(\cdot,m)$ is uniformly continuous on $[0,1]$ for each fixed $m$,
the right-hand side  of \eqref{eq:I2_lim} converges to $0$ uniformly in $s$ as $L\to\infty$.

\smallskip
\emph{Estimate of $I_{L,K,\varepsilon}^{(3)}(s)$.}
Choosing in \eqref{eq:weakconv_prob} test functions of the form
$$
\psi(u,k)=f(u)\,\mathbf 1_{\{k=m\}},
\qquad f\in C([0,1]), \ m\le K+1,
$$
we obtain
$$
\int_{[0,1]} f(u)\,\widetilde\pi_s^L(du,\{m\})
\longrightarrow
\int_{[0,1]} f(u)\,\pi_s(du,\{m\}) \qquad \text{ a.s. }
$$
for every $m\le K+1$ and  for Lebesgue-a.e. $s\in[0,T]$.
For fixed $K$, the functional $F_{K,\varepsilon}$ is a finite sum, over
$0\le k,\ell\le K$, of terms of the form
$$
\iint_{[0,1]^2}
H_{k,\ell}(u,v)\,
\mu(du,\{k\})\,\mu(dv,\{\ell\}),
$$
where
$
H_{k,\ell}(u,v)
:=
W^\varepsilon(u,v)\,c(k,\ell)\,
G_\varphi((u,k),(v,\ell))
\in C_b([0,1]^2).
$
Thus $F_{K,\varepsilon}$ only tests finitely many level marginals
$\mu(\cdot,\{0\}),\ldots,\mu(\cdot,\{K\})$ against bounded continuous
functions.
Therefore
we obtain
$$
F_{K,\varepsilon}(\widetilde\pi_s^L)
\longrightarrow
F_{K,\varepsilon}(\pi_s) \qquad \text{a.s. for Lebesgue-a.e. } s\in[0,T].
$$
This gives, in particular,
$$
I_{L,K,\varepsilon}^{(3)}(s)
=
\left|
F_{K,\varepsilon}(\widetilde\pi_s^L)
-
F_{K,\varepsilon}(\pi_s)
\right|
\overset{L\to\infty}{\longrightarrow} 0  \quad \text{a.s.}
$$
for Lebesgue-a.e. $s\in[0,T]$.
Moreover, since $F_{K,\varepsilon}$ is bounded, 
$
I_{L,K,\varepsilon}^{(3)}(s)
\le 2\|F_{K,\varepsilon}\|_\infty .
$
Hence, by dominated convergence,
$$
\mathbb E\left[
\int_0^t I_{L,K,\varepsilon}^{(3)}(s)\,ds
\right]
\longrightarrow 0 \qquad \text{ as } L\to \infty.
$$
For the estimate of \(I^{(4)}_{L,K,\varepsilon}(s)\) we need first the following consideration.
\paragraph{Spatial marginal of the limit measure.}
We claim that every limit point \((\pi_t)_{t\in[0,T]}\) has
Lebesgue first marginal. Let
\[
\Gamma:\mathcal P([0,1]\times\mathbb N_0)\to\mathcal P([0,1]),
\qquad
\Gamma(\mu)(A):=\mu(A\times\mathbb N_0),
\]
be the projection onto the first coordinate. The map \(\Gamma\) is continuous
with respect to weak convergence. Therefore, by the continuous mapping
theorem,
\[
\Gamma(\pi^L)\Rightarrow \Gamma(\pi)
\qquad
\text{in }D([0,T],\mathcal P([0,1])).
\]
On the other hand, for every \(L\) and every \(t\in[0,T]\),
\[
\Gamma(\pi_t^L)
=
\frac1L\sum_{x=1}^L\delta_{x/L}
=:\lambda_L.
\]
Thus \(\Gamma(\pi^L)\) is the constant path \(t\mapsto\lambda_L\). Since
\[
\lambda_L\Rightarrow du
\qquad\text{in }\mathcal P([0,1]),
\]
we  have
$
\Gamma(\pi_t)=du$
for all  $t\in[0,T].
$
Equivalently,
since \(\Gamma(\pi_t)\) is the first marginal of \(\pi_t\), we get
\begin{equation}\label{eq:leb_marg}
\pi_t(du,\mathbb N_0)=du
\qquad\text{for all }t\in[0,T].
\end{equation}

\textit{Estimate of \(I^{(4)}_{L,K,\varepsilon}(s)\).}
Using the definition of \(G_\phi^K\) and the growth assumption on the rates,
we have
\[
c(k,\ell)|G_\phi^K((u,k),(v,\ell))|
\le
4\|\phi\|_\infty C_c K(1+K)
\mathbf 1_{\{k\le K,\ \ell\le K\}}.
\]
Hence
\[
\begin{aligned}
I^{(4)}_{L,K,\varepsilon}(s)
&\le
C_{K,\phi}
\sum_{k,\ell\le K}
\iint_{[0,1]^2}
|W^\varepsilon-W|(u,v)\,
\pi_s(du,\{k\})\pi_s(dv,\{\ell\})
\\
&\le
C_{K,\phi}
\iint_{[0,1]^2}
|W^\varepsilon-W|(u,v)\,
\pi_s(du,\mathbb N_0)\pi_s(dv,\mathbb N_0).
\end{aligned}
\]
From \eqref{eq:leb_marg}
we obtain
\[
\begin{aligned}
I^{(4)}_{L,K,\varepsilon}(s)
&\le
C_{K,\phi}
\iint_{[0,1]^2}
|W^\varepsilon-W|(u,v)\,du\,dv
=
C_{K,\phi}
\|W^\varepsilon-W\|_{L^1([0,1]^2)}\\
&\le
C_{K,\phi}\varepsilon .
\end{aligned}
\]
This concludes the proof of \eqref{eq:goal_intermed}.

\phantomsection
\paragraph{Step 5. Limit of the martingale formulation.}\label{step5} 

We now collect all the contributions. To ease notation, let us denote
$$
\mathcal D_L(s)
:=
\iint_{E^2} W_L(u,v)\,c(k,\ell)\,G_\varphi((u,k),(v,\ell))\,
\pi_s^L(du,dk)\pi_s^L(dv,d\ell),
$$
$$
\mathcal D_L^K(s)
:=
\iint_{E^2} W_L(u,v)\,c(k,\ell)\,G_\varphi^K((u,k),(v,\ell))\,
\pi_s^L(du,dk)\pi_s^L(dv,d\ell),
$$
and similarly
$$
\mathcal D(s)
:=
\iint_{E^2} W(u,v)\,c(k,\ell)\,G_\varphi((u,k),(v,\ell))\,
\pi_s(du,dk)\pi_s(dv,d\ell),
$$
$$
\mathcal D^K(s)
:=
\iint_{E^2} W(u,v)\,c(k,\ell)\,G_\varphi^K((u,k),(v,\ell))\,
\pi_s(du,dk)\pi_s(dv,d\ell).
$$
Taking expectations in the above inequality, we obtain, for every fixed \(K\),
\[
\begin{aligned}
\limsup_{L\to\infty}
\mathbb E\left[
\left|
\int_0^t \mathcal D_L(s)\,ds
-
\int_0^t \mathcal D(s)\,ds
\right|
\right]
&\le
\limsup_{L\to\infty}
\mathbb E\left[
\int_0^t
\left|\mathcal D_L^K(s)-\mathcal D^K(s)\right|\,ds
\right]
\\
&\quad+
\limsup_{L\to\infty}
\mathbb E\left[
\int_0^t
\left|\mathcal D_L(s)-\mathcal D_L^K(s)\right|\,ds
\right]
\\
&\quad+
\mathbb E\left[
\int_0^t
\left|\mathcal D(s)-\mathcal D^K(s)\right|\,ds
\right].
\end{aligned}
\]
The first term on the right-hand side is zero for every fixed \(K\) (see
\eqref{eq:limA} and \eqref{eq:goal_intermed}). Hence
\[
\begin{aligned}
\limsup_{L\to\infty}
\mathbb E\left[
\left|
\int_0^t \mathcal D_L(s)\,ds
-
\int_0^t \mathcal D(s)\,ds
\right|
\right]
&\le
\sup_L
\mathbb E\left[
\int_0^t
\left|\mathcal D_L(s)-\mathcal D_L^K(s)\right|\,ds
\right]
\\
&\quad+
\mathbb E\left[
\int_0^t
\left|\mathcal D(s)-\mathcal D^K(s)\right|\,ds
\right].
\end{aligned}
\]
Letting now \(K\to\infty\), and using
\eqref{eq:tail-prelimit}--\eqref{eq:tail-limit}, we obtain
\[
\limsup_{L\to\infty}
\mathbb E\left[
\left|
\int_0^t \mathcal D_L(s)\,ds
-
\int_0^t \mathcal D(s)\,ds
\right|
\right]
=0.
\]
Therefore,
\[
\mathbb E\left[
\left|
\int_0^t \mathcal D_L(s)\,ds
-
\int_0^t \mathcal D(s)\,ds
\right|
\right]
\longrightarrow 0,
\qquad L\to\infty.
\]
This proves \eqref{eq:L1_goal}. In particular, since convergence in \(L^1\)
implies convergence in probability, we obtain
\begin{equation}
\begin{aligned}\label{eq:full-drift-convergence}
&\int_0^t
\iint W_L(u,v)c(k,\ell)G_\varphi((u,k),(v,\ell))
\,\pi_s^L(du,dk)\pi_s^L(dv,d\ell)\,ds
\\
&\hspace{2cm}\xrightarrow[L\to\infty]{\mathbb P}
\int_0^t
\iint W(u,v)c(k,\ell)G_\varphi((u,k),(v,\ell))
\,\pi_s(du,dk)\pi_s(dv,d\ell)\,ds .
\end{aligned}
\end{equation}
We now pass to the limit in the martingale formulation. For every $t\in[0,T]$,
$$
\langle \pi_t^L,\varphi\rangle
=
\langle \pi_0^L,\varphi\rangle
+
\int_0^t \mathcal D_L(s)\,ds
+
M_\Phi^L(t).
$$
Sending $L\to \infty$ we have, in probability:
\begin{itemize}
\item $\langle \pi_t^L,\varphi\rangle \to \langle \pi_t,\varphi\rangle$
by \eqref{eq:weak_conv_pi}  for Lebesgue-a.e. $t\in[0,T]$;
\item $\langle \pi_0^L,\varphi\rangle \to \langle \pi_0,\varphi\rangle$ by the
assumption \eqref{init_conv} on the initial condition;
\item $M_\Phi^L(t)\to 0$, uniformly on compact time intervals;
\item the drift term converges by \eqref{eq:full-drift-convergence}.
\end{itemize}

Therefore every limit point $\pi$ satisfies the weak formulation
\begin{equation}\label{eq:weak}
\langle \pi_t,\varphi\rangle
=
\langle \pi_0,\varphi\rangle
+
\int_0^t
\iint W(u,v)\,c(k,\ell)\,G_\varphi((u,k),(v,\ell))\,
\pi_s(du,dk)\pi_s(dv,d\ell)\,ds
\end{equation}
for every $\varphi\in C_b([0,1]\times\mathbb N_0)$. 

\subsection{Uniqueness of weak solutions} \label{subsec:uniqueness}

\begin{lemma}\label{lem:moment_limit}
Let $(\mu_t)_{t\ge 0}$ be a weak solution of \eqref{eq:weak_limit} such that for every $T>0,$
\begin{equation}\label{bound:m1}
m_1(t):=\int_{[0,1]\times \N_0} k\,\mu_t(du,dk)<C_T \text{ for all } t\in [0,T]
\end{equation}
for some positive constant $C_T$, and \[
m_2(0):=\int_{[0,1]\times \N_0} k^2\,\mu_0(du,dk)<\infty.
\]
Then there exists a constant $C>0$ such that
\begin{equation}\label{eq:moment_limit_bound}
m_2(t):=\int_{[0,1]\times \N_0} k^2\,\mu_t(du,dk)
\le \bigl(1+m_2(0)\bigr)e^{Ct}
\qquad \text{for all } t\in [0,T].
\end{equation}
In particular, for every $T>0$,
\[
\sup_{t\in[0,T]} \int_{[0,1]\times \N_0} k^2\,\mu_t(du,dk)<\infty.
\]
\end{lemma}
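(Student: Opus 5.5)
The plan is to mimic the microscopic computation in the proof of Prop.~\ref{sec_mom} at the level of the limiting equation \eqref{eq:weak_limit}. The function $\varphi(u,k)=k^2$ is unbounded, so it is not an admissible test function. I will therefore work with the truncations $\varphi_M(u,k):=(k\wedge M)^2$, which belong to $C_b([0,1]\times\mathbb N_0)$, and then let $M\to\infty$.

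For $\varphi_M$, the increment $G_{\varphi_M}((u,k),(v,\ell))$ splits into two parts. The first is $(k-1)^2-k^2=1-2k\le 1$ when $k\le M$, and $0$ when $k>M$. The second is $((\ell+1)\wedge M)^2-(\ell\wedge M)^2\le 2\ell+1$. Both parts are bounded by constants depending on $M$, so all integrals in \eqref{eq:weak_limit} are finite provided $\int c\,d\mu\otimes\mu<\infty$. This holds because $c(k,\ell)\le C_c k(1+\ell)$ and $\langle\mu_s,k\rangle\le C_T$. Using the bound $W\le C_W$, which follows from boundedness of $W$, together with \eqref{eq:lip}, the drift is at most
\[
\|W\|_\infty C_c\iint k(1+\ell)\bigl(1+2\ell+1\bigr)\,\mu_s(du,dk)\mu_s(dv,d\ell)
\le C\,m_1(s)\bigl(1+m_1(s)+\langle\mu_s,\ell^2\rangle\bigr).
\]
Here I keep the negative part $-2k$ aside, exactly as in the proof of Prop.~\ref{sec_mom}, only bounding $1-2k\le 1$. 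The key point is that the right-hand side contains $\langle\mu_s,(\ell)^2\rangle$ without truncation. I would therefore rather keep the bound in truncated form: the gain term is nonzero only when $\ell<M$, where $((\ell+1)\wedge M)^2-(\ell\wedge M)^2=2\ell+1$. Multiplied by $(1+\ell)$, this gives at most $C(1+(\ell\wedge M)^2)$, since $\ell\le M-1$ on that set. This yields the closed inequality
\[
\langle\mu_t,\varphi_M\rangle\le\langle\mu_0,\varphi_M\rangle+\int_0^t C_T'\bigl(1+\langle\mu_s,\varphi_M\rangle\bigr)\,ds,
\]
with $C_T'$ depending only on $C_c$, $\|W\|_\infty$ and $C_T$, and not on $M$.

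Gronwall's lemma then gives $1+\langle\mu_t,\varphi_M\rangle\le(1+\langle\mu_0,\varphi_M\rangle)e^{C t}\le(1+m_2(0))e^{Ct}$. Letting $M\to\infty$ by monotone convergence yields \eqref{eq:moment_limit_bound}, and the $\sup_{t\in[0,T]}$ statement follows immediately.

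The main obstacle is justifying that the drift integrand is integrable in time, and handling the bookkeeping of the truncated increments at the boundary $\ell=M-1,\,M$ and $k=M,\,M+1$. I would check these boundary cases explicitly so that the increments are dominated by $C(1+(\ell\wedge M)^2)$ and by $1$ respectively. Integrability of the drift uses only the first-moment bound \eqref{bound:m1} together with the boundedness of $G_{\varphi_M}$. If the sign bookkeeping on the loss term at $k=M+1$ causes trouble (there the increment equals $M^2-M^2=0$, so it is harmless), the fallback is to bound that term crudely by $0$.
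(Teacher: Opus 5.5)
Your argument is correct, and its skeleton is the paper's. Both test the weak formulation with a bounded truncation of $k^2$ (the paper uses $k^2\wedge M$, you use $(k\wedge M)^2$), discard the loss increment, and bound the gain increment by $2\ell+1$. Both then use $c(k,\ell)\le C_c k(1+\ell)$ with the first-moment bound, followed by Gronwall and monotone convergence.

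The genuine difference is where Gronwall is applied. The paper bounds the truncated drift by $\bar C(1+m_2(t))$, with the \emph{untruncated} second moment. It then lets $M\to\infty$ to get $m_2(t)\le m_2(0)+\bar C\int_0^t(1+m_2(s))\,ds$, and only afterwards invokes Gronwall. That last step is vacuous unless one already knows $\int_0^t m_2(s)\,ds<\infty$. This is not among the hypotheses of the lemma; it appears only in the solution class of Theorem~\ref{mainthm}.

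You instead observe that the gain increment vanishes for $\ell\ge M$, so that $(1+\ell)(2\ell+1)\mathbf 1_{\{\ell\le M-1\}}\le C\bigl(1+(\ell\wedge M)^2\bigr)$. This closes the inequality in terms of $\langle\mu_s,\varphi_M\rangle\le M^2$, with a constant independent of $M$. Gronwall is therefore applied to a bounded, continuous function, and the passage $M\to\infty$ comes afterwards. Your version thus proves the lemma exactly as stated, without assuming a priori finiteness of $m_2$.

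A minor point: the loss increment $1-2k$ is in fact $\le 0$ for $1\le k\le M$, and $k=0$ does not contribute since $c(0,\cdot)=0$. Your bound by $1$ is therefore unnecessary, though harmless, since it only adds the bounded term $\|W\|_\infty C_c\,m_1(s)(1+m_1(s))$.
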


\begin{proof}
For $M\in \N$, define the bounded test function
\[
\phi_M(u,k):=k^2\wedge M, \qquad (u,k)\in [0,1]\times \N_0.
\]
Since $(\mu_t)_{t\ge 0}$ is a weak solution of \eqref{eq:weak_limit}, for every
$t\ge 0$ we have
\begin{align}
\langle \mu_t,\phi_M\rangle
&=
\langle \mu_0,\phi_M\rangle
+
\int_0^t
\iint
W(u,v)c(k,\ell)\,
G_{\phi_M}((u,k),(v,\ell))\,
\mu_s(du,dk)\mu_s(dv,d\ell)\,ds,
\label{eq:weak_phiM}
\end{align}
where
\[
G_{\phi_M}((u,k),(v,\ell))
=
\phi_M(u,k-1)-\phi_M(u,k)+\phi_M(v,\ell+1)-\phi_M(v,\ell).
\]
Since \(\phi_M\) is nondecreasing in the second variable,
\[
\phi_M(u,k-1)-\phi_M(u,k)\le 0.
\]
Moreover, the map \(r\mapsto r\wedge M\) is nondecreasing and \(1\)-Lipschitz, hence
\[
\phi_M(v,\ell+1)-\phi_M(v,\ell)
\le (\ell+1)^2-\ell^2=2\ell+1.
\]
Therefore,
\[
G_{\phi_M}((u,k),(v,\ell))\le 2\ell+1.
\]
Using the growth assumption \eqref{eq:lip} and the boundedness of $W$, we obtain
\begin{align*}
&W(u,v)c(k,\ell)G_{\phi_M}((u,k),(v,\ell))
\\
&\qquad
\le 2C_WC_c\,k(1+\ell)^2
\le 2C_WC_c\,k(1+2\ell+\ell^2).
\end{align*}
Integrating with respect to $\mu_s(du,dk)\mu_s(dv,d\ell)$ and using that the
first moment is bounded on $[0,T]$, we get
\begin{align*}
\frac{d}{dt}\langle \mu_t,\phi_M\rangle
&\le
2C_WC_c
\left(\int_{[0,1]\times \N_0} k\,\mu_t(du,dk)\right)
\left(\int_{[0,1]\times \N_0} (1+2\ell+\ell^2)\,\mu_t(dv,d\ell)\right)
\\
&\le
2C_WC_c\, C_T
\left(1+2 C_T+\int_{[0,1]\times \N_0} \ell^2\,\mu_t(dv,d\ell)\right)\\
&\leq \bar C \bigl(1+m_2(t)\bigr),
\end{align*}
for a constant $\bar C>0$ depending only on $C_W$, $C_c$ and $C_T$.
Since \(\phi_M(k)\uparrow k^2\), as $M\to\infty$, by the monotone convergence theorem
\[
\langle\mu_t,\phi_M\rangle \uparrow m_2(t),
\qquad
\langle\mu_0,\phi_M\rangle \uparrow m_2(0).
\]
Moreover,
\[
\langle \mu_t,\phi_M\rangle
\le
\langle \mu_0,\phi_M\rangle
+
\bar C\int_0^t (1+m_2(s))\,ds.
\]
Passing to the limit \(M\to\infty\) yields
\[
m_2(t)
\le
m_2(0)
+
\bar C\int_0^t (1+m_2(s))\,ds.
\]
By Gronwall's lemma, for every \(t\in[0,T]\),
\[
m_2(t)\le \bigl(1+m_2(0)\bigr)e^{\bar C t}.
\]
Renaming the constant concludes the proof.
\end{proof}


\begin{proposition}[Stability and uniqueness]\label{prop:uniqueness}
Let $(\mu_t)_{t\in[0,T]}$ and $(\nu_t)_{t\in[0,T]}$ be two weak solutions of
\eqref{eq:weak_limit} satisfying the assumptions of Lemma \ref{lem:moment_limit}.
For $k\ge0$, write
\[
\mu_t^k(du):=\mu_t(du,\{k\}),\qquad
\nu_t^k(du):=\nu_t(du,\{k\}),
\]
and set
\[
\Delta_t^k:=\mu_t^k-\nu_t^k .
\]
Define
\[
\Theta(t):=\sum_{k\ge0}(1+k)|\Delta_t^k|([0,1]).
\]
Then there exists a constant $C_T>0$ such that
\[
\Theta(t)\le e^{C_Tt}\Theta(0),
\qquad t\in[0,T].
\]
In particular, if $\mu_0=\nu_0$, then $\mu_t=\nu_t$ for every $t\in[0,T]$.
\end{proposition}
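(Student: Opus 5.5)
The plan is to follow the hint given in the introduction: put all level marginals of both solutions over a common dominating measure and prove a weighted $L^1$ Gronwall estimate for the densities. First, by Lemma~\ref{lem:moment_limit} both solutions have second moments bounded on $[0,T]$. Hence the rates
\[
\mu_t(u,k)=\int\sum_\ell W(u,v)c(k,\ell)\,\mu_t^\ell(dv),
\qquad
\lambda_t(u,k)=\int\sum_\ell W(v,u)c(\ell,k)\,\mu_t^\ell(dv)
\]
are well defined and satisfy $\mu_t(u,k)\le C_WC_c k(1+m_1)$ and $\lambda_t(u,k)\le C_WC_c m_1(1+k)$; the same bounds hold for $\nu$. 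Testing \eqref{eq:weak_limit} with $\varphi=\psi(u)\mathbf 1_{\{k=m\}}$ as in Remark~\ref{rem:marginal-form} shows that each $t\mapsto\mu_t^m$ is a measure-valued ODE. Its right-hand side is absolutely continuous with respect to $\mu_t^{m}$, $\mu_t^{m\pm1}$ and $\mu_t^m$ times a bounded density.

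Next fix the dominating measure
\[
\sigma(du):=\sum_k 2^{-k}\int_0^T(\mu_t^k+\nu_t^k)(du)\,dt+\sum_k 2^{-k}(\mu_0^k+\nu_0^k)(du).
\]
Alternatively, use $du$ directly, since \eqref{eq:leb_marg}-type reasoning gives $\mu_t^k\le\mu_t(\cdot,\mathbb N_0)$, which is Lebesgue for solutions arising as limits. Since the problem concerns general weak solutions, I would keep $\sigma$ and first show $\mu_t^k\ll\sigma$ for all $t$ by integrating the ODE in time. Write $\mu_t^k=f_t^k\sigma$ and $\nu_t^k=g_t^k\sigma$. The equation then becomes, for $\sigma$-a.e.\ $u$, an absolutely continuous-in-time system
\[
\partial_t f^k=f^{k+1}\mu(u,k+1)+f^{k-1}\lambda(u,k-1)-f^k(\mu(u,k)+\lambda(u,k)),
\]
and similarly for $g$ with the $\nu$-rates. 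Set $h^k=f^k-g^k$, so that $|\Delta_t^k|([0,1])=\int|h_t^k|\,d\sigma$. Differentiate $|h^k|$ using $\frac{d}{dt}|h|=\operatorname{sgn}(h)\,\dot h$ a.e., multiply by $(1+k)$, sum over $k$ and integrate in $\sigma$. This step is justified by truncating to $k\le K$ and passing to the limit with the second-moment bounds.

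The difference $\dot h^k$ splits into two parts. The first is the linear part with the $\mu$-rates acting on $h$. Its contribution is the standard birth--death computation: shifting indices, the terms combine into $\sum_k|h^k|\big[\mu(u,k)((1+(k-1))-(1+k))+\lambda(u,k)((1+(k+1))-(1+k))\big]\le\sum_k|h^k|\lambda(u,k)\le C\sum_k(1+k)|h^k|$. The second is the nonlinear part, where the rates differ and act on $g$:
\[
|\mu_t^{(\mu)}(u,k)-\mu_t^{(\nu)}(u,k)|\le C_WC_c k\sum_\ell(1+\ell)|\Delta_t^\ell|([0,1])=C k\,\Theta(t),
\]
and similarly $|\lambda^{(\mu)}-\lambda^{(\nu)}|\le C(1+k)\sum_\ell\ell|\Delta^\ell|\le C(1+k)\Theta(t)$. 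Multiplying by $(1+k)g^k$ and integrating produces $C\Theta(t)\int\sum_k(1+k)^2g^k\,d\sigma\le C(1+\sup_{t\le T}m_2^\nu(t))\Theta(t)$, which is finite by Lemma~\ref{lem:moment_limit}. Altogether $\Theta(t)\le\Theta(0)+C_T\int_0^t\Theta(s)\,ds$, and Gronwall's lemma gives the claim. If $\mu_0=\nu_0$, then $\Theta\equiv0$, so all level marginals agree and $\mu_t=\nu_t$.

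The main obstacle is rigor in the sign computation. It requires (i) the density representation with time-absolutely-continuous $f_t^k(u)$ for $\sigma$-a.e.\ $u$, and (ii) the interchange of the infinite sum over $k$ with differentiation. For (ii), the linear part has the weight $(1+k)$ multiplied by rates of size $k$. I would first run the argument for $\sum_{k\le K}$. The boundary terms at $K$ are bounded by $C(1+K)K\int(f^K+g^K+f^{K+1}+g^{K+1})d\sigma$. Their time integral vanishes as $K\to\infty$, since $\sum_K K^2(\mu^K+\nu^K)([0,1])<\infty$ uniformly on $[0,T]$.
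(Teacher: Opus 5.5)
Your argument is correct and follows the paper's proof essentially step by step. Both use densities with respect to a common dominating measure and a $(1+k)$-weighted $L^1$ estimate via $\operatorname{sgn}(h^k)$. Both split the difference into two parts: the $\mu$-rates acting on $h$, which is the paper's $I_M$ and is handled by the same shift-and-sign computation, and the rate differences acting on $g$, which is the paper's $J_M$ and is controlled by the second moment of $\nu$ from Lemma~\ref{lem:moment_limit}. Both then truncate, with a boundary term of order $K^2\|\Delta^{K+1}\|_{\rm TV}$ that is removed using second-moment tails, and conclude with Gronwall.

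The only real difference is the choice of dominating measure. The paper notes that $\varphi(u,k)=\psi(u)$ gives $G_\varphi\equiv0$, so the spatial marginal is conserved for every weak solution, not only for limits as you suggested. Hence $\zeta:=\sum_k(\mu_0^k+\nu_0^k)$ dominates all $\mu_t^k,\nu_t^k$ at once, which spares you the separate absolute-continuity argument for your time-integrated $\sigma$.
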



\begin{proof}

We define
\begin{equation}\label{eq:theta_M}
\Theta_M(t):=\sum_{k=0}^M(1+k)\|\Delta_t^k\|_{\rm TV}.
\end{equation}

First observe that the \(u\)-marginal is conserved. Indeed, if
\(\varphi(u,k)=\psi(u)\), then
$
G_\varphi((u,k),(v,\ell))=0.
$
Hence the weak formulation yields
\[
\int_{[0,1]\times\N_0}\psi(u)\,\mu_t(du,dk)
=
\int_{[0,1]\times\N_0}\psi(u)\,\mu_0(du,dk)
\]
for every bounded continuous function \(\psi\). 
Therefore
\begin{equation}\label{eq:cons_marg}
\sum_{k\ge0}\mu_t^k=\sum_{k\ge0}\mu_0^k,
\qquad
\sum_{k\ge0}\nu_t^k=\sum_{k\ge0}\nu_0^k.
\end{equation}
Consider the fixed dominating measure
\[
\zeta(du):=\sum_{k\ge0}\mu_0^k(du)
        +\sum_{k\ge0}\nu_0^k(du).
\]
From \eqref{eq:cons_marg},
\[
\mu_t^k\le \sum_{j\ge0}\mu_t^j=\sum_{j\ge0}\mu_0^j\le \zeta,
\qquad
\nu_t^k\le \sum_{j\ge0}\nu_t^j=\sum_{j\ge0}\nu_0^j\le \zeta.
\]
Hence, for each \(k\),
\[
\mu_t^k\ll \zeta,\qquad \nu_t^k\ll\zeta,\qquad \Delta_t^k\ll\zeta.
\]
Write
\[
\mu_t^k=p_t^k\zeta,\qquad
\nu_t^k=q_t^k\zeta,\qquad
\Delta_t^k=f_t^k\zeta,
\]
where \(f_t^k=p_t^k-q_t^k\).
Fix \(k\in\N_0\). By testing the weak formulation against functions of the form
\(\varphi(u,m)=\psi(u)\mathbf 1_{\{m=k\}}\), one obtains that the
\(k\)-th marginal satisfies the  equation
\[
\partial_t \mu_t^k
=
A_{\mu,t}(\cdot,k+1)\mu_t^{k+1}
-
A_{\mu,t}(\cdot,k)\mu_t^k
+
\mathbf 1_{\{k\ge1\}}
B_{\mu,t}(\cdot,k-1)\mu_t^{k-1}
-
B_{\mu,t}(\cdot,k)\mu_t^k,
\]
where
\[
A_{\mu,t}(u,k)
:=
\sum_{\ell\ge0}
\int_{[0,1]}
W(u,v)c(k,\ell)\,\mu_t^\ell(dv),
\]
and
\[
B_{\mu,t}(u,k)
:=
\sum_{\ell\ge0}
\int_{[0,1]}
W(v,u)c(\ell,k)\,\mu_t^\ell(dv).
\]
Here \(A_{\mu,t}(u,k)\) is the rate at which a site with occupancy \(k\)
loses one particle, while \(B_{\mu,t}(u,k)\) is the rate at which it gains
one particle. Similarly one can define \(A_{\nu,t}\) and \(B_{\nu,t}\).
Since \(\mu_t^k=p_t^k\zeta\) and \(\nu_t^k=q_t^k\zeta\), the previous
equation can be written in terms of densities as
\[
\partial_t p_t^k
=
A_{\mu,t}(\cdot,k+1)p_t^{k+1}
-
A_{\mu,t}(\cdot,k)p_t^k
+
\mathbf 1_{\{k\ge1\}}
B_{\mu,t}(\cdot,k-1)p_t^{k-1}
-
B_{\mu,t}(\cdot,k)p_t^k,
\]
and analogously
\[
\partial_t q_t^k
=
A_{\nu,t}(\cdot,k+1)q_t^{k+1}
-
A_{\nu,t}(\cdot,k)q_t^k
+
\mathbf 1_{\{k\ge1\}}
B_{\nu,t}(\cdot,k-1)q_t^{k-1}
-
B_{\nu,t}(\cdot,k)q_t^k.
\]
Subtracting the two equations and recalling that
\(f_t^k=p_t^k-q_t^k\), we obtain
\[
\partial_t f_t^k=h_t^k
\qquad\text{in }L^1(\zeta),
\]
where
\begin{equation}\label{eq:h_t}
\begin{aligned}
h_t^k(u)
={}&
A_{\mu,t}(u,k+1)p_t^{k+1}(u)
-
A_{\mu,t}(u,k)p_t^k(u)
\\
&+
\mathbf 1_{\{k\ge1\}}
B_{\mu,t}(u,k-1)p_t^{k-1}(u)
-
B_{\mu,t}(u,k)p_t^k(u)
\\
&-
A_{\nu,t}(u,k+1)q_t^{k+1}(u)
+
A_{\nu,t}(u,k)q_t^k(u)
\\
&-
\mathbf 1_{\{k\ge1\}}
B_{\nu,t}(u,k-1)q_t^{k-1}(u)
+
B_{\nu,t}(u,k)q_t^k(u).
\end{aligned}
\end{equation}
Using the moment bound \eqref{bound:m1} together with the growth assumption \eqref{eq:lip} on \(c\), each term in \(h_t^k\) is therefore
integrable in \(L^1(\zeta)\), and for every fixed \(k\) there exists a constant
\(C_{k,T}<\infty\) such that
\[
\|h_t^k\|_{L^1(\zeta)}\le C_{k,T}
\qquad\text{for a.e. }t\in[0,T].
\]
Hence \(h^k\in L^1([0,T];L^1(\zeta))\). Therefore,
$
f^k \in W^{1,1}\bigl([0,T];L^1(\zeta)\bigr),
$
with weak derivative \(h^k\). Consequently, the map
$$
t \longmapsto \|f_t^k\|_{L^1(\zeta)}
$$
is absolutely continuous on \([0,T]\) and the \(L^1\)-chain rule applies.
Consequently, for every \(M\), the map
\[
t\mapsto \Theta_M(t)
=
\sum_{k=0}^M(1+k)\int_{[0,1]} |f_t^k(u)|\,\zeta(du)
\]
is absolutely continuous, and for a.e. \(t\),
\[
\frac d{dt}\Theta_M(t)
=
\sum_{k=0}^M(1+k)
\int_{[0,1]} \operatorname{sgn}(f_t^k(u))h_t^k(u)\,\zeta(du).
\]
Set
\[
s_k(t,u):=(1+k)\mathbf 1_{\{k\le M\}}
\operatorname{sgn}(f_t^k(u)),
\qquad
s_{-1}(t,u):=0.
\]
Then
\[
\frac d{dt}\Theta_M(t)
=
\sum_{k\ge0}\int_{[0,1]} s_k(t,u)h_t^k(u)\,\zeta(du).
\]
Substituting \eqref{eq:h_t} in the display above, and re-indexing the gain terms according to \(k\mapsto k+1\) in the
\(A\)-contribution and \(k\mapsto k-1\) in the \(B\)-contribution, yields
\[
\begin{aligned}
\frac d{dt}\Theta_M(t)
={}&
\sum_{k,\ell\ge0}
\iint_{[0,1]^2}
\Big[
W(u,v)c(k,\ell)(s_{k-1}(t,u)-s_k(t,u))
\\
&\qquad\qquad\qquad
+
W(v,u)c(\ell,k)(s_{k+1}(t,u)-s_k(t,u))
\Big]
\\
&\qquad\qquad\qquad
\times
\big[
\mu_t^k(du)\mu_t^\ell(dv)
-
\nu_t^k(du)\nu_t^\ell(dv)
\big].
\end{aligned}
\]
Using 
\[
\mu_t^k\otimes\mu_t^\ell-\nu_t^k\otimes\nu_t^\ell
=
\Delta_t^k\otimes\mu_t^\ell
+
\nu_t^k\otimes\Delta_t^\ell,
\]
we write
\begin{equation}\label{eq:Decomp_theta}
\frac d{dt}\Theta_M(t)=I_M(t)+J_M(t),
\end{equation}
with 
\[
\begin{aligned}
I_M(t)
:={}&
\sum_{k,\ell\ge0}
\iint_{[0,1]^2}
\Big[
W(u,v)c(k,\ell)(s_{k-1}(t,u)-s_k(t,u))
\\
&\qquad\qquad\qquad
+
W(v,u)c(\ell,k)(s_{k+1}(t,u)-s_k(t,u))
\Big]
\,
\Delta_t^k(du)\,
\mu_t^\ell(dv),
\\[0.5em]
J_M(t)
:={}&
\sum_{k,\ell\ge0}
\iint_{[0,1]^2}
\Big[
W(u,v)c(k,\ell)(s_{k-1}(t,u)-s_k(t,u))
\\
&\qquad\qquad\qquad
+
W(v,u)c(\ell,k)(s_{k+1}(t,u)-s_k(t,u))
\Big]
\,
\nu_t^k(du)\,
\Delta_t^\ell(dv).
\end{aligned}
\]
We now want to bound them. We first consider the indices \(0\le k\le M\). For the \(I_M\) term, we use the special sign structure. Since
\[
\Delta_t^k=f_t^k\zeta,
\qquad
\operatorname{sgn}(f_t^k)\Delta_t^k=|\Delta_t^k|,
\]
one has
\begin{equation}\label{eq:first_Dis_s}
(s_{k-1}-s_k)\Delta_t^k\le 0,
\end{equation}
and
\begin{equation}\label{eq:sec_Dis_s}
(s_{k+1}-s_k)\Delta_t^k\le |\Delta_t^k|.
\end{equation}
For the first inequality (the second one is treated similarly), we used
\[
\begin{aligned}
(s_{k-1}(t,u)-s_k(t,u))\,\Delta_t^k(du)
&=
(s_{k-1}(t,u)-s_k(t,u))f_t^k(u)\,\zeta(du)
\\
&\le
\bigl(k-(1+k)\bigr)|f_t^k(u)|\,\zeta(du)
\\
&=
-|f_t^k(u)|\,\zeta(du)
\le 0.
\end{aligned}
\]
For the contribution with \(0\le k\le M\), using
\[
(s_{k-1}-s_k)\Delta_t^k\le 0,
\qquad
(s_{k+1}-s_k)\Delta_t^k\le |\Delta_t^k|,
\]
together with \(W\ge0\), \(c\ge0\), and \(\|W\|_\infty<\infty\), we get\footnote{The constant \(C\) may change from line to line.}
\begin{equation}\label{eq:contrib1}
\begin{aligned}
&\sum_{k=0}^M\sum_{\ell\ge0}
\iint_{[0,1]^2}
\Big[
W(u,v)c(k,\ell)(s_{k-1}(t,u)-s_k(t,u))
\\
&\qquad\qquad\qquad\qquad
+
W(v,u)c(\ell,k)(s_{k+1}(t,u)-s_k(t,u))
\Big]
\Delta_t^k(du)\mu_t^\ell(dv)
\\
&\qquad \overset{\eqref{eq:first_Dis_s},\eqref{eq:sec_Dis_s}}{\le}
C
\sum_{k=0}^M\sum_{\ell\ge0}
c(\ell,k)\,
\|\Delta_t^k\|_{\rm TV}\,
\mu_t^\ell([0,1])
\overset{\eqref{eq:lip}}{\le}
C
\sum_{k=0}^M\sum_{\ell\ge0}
\ell(1+k)\,
\|\Delta_t^k\|_{\rm TV}\,
\mu_t^\ell([0,1])
\\
&\qquad=
C
\left(\sum_{\ell\ge0}\ell\,\mu_t^\ell([0,1])\right)
\left(\sum_{k=0}^M(1+k)\|\Delta_t^k\|_{\rm TV}\right)\overset{\eqref{bound:m1}}{\le}
C^{(1)}_T\Theta(t).
\end{aligned}
\end{equation}
For \(k\ge M+2\), both differences \(s_{k-1}-s_k\) and
\(s_{k+1}-s_k\) vanish. Hence the only contribution not covered by the
previous sign estimates comes from the boundary index \(k=M+1\). 
Since \(s_{M+1}=0\), we have \(s_M-s_{M+1}=s_M\), and
$
|s_M(t,u)|\le M+1$.
For this isolated term we produce the bound
\begin{equation}\label{eq:contrib2}
\begin{aligned}
&\sum_{\ell\ge0}
\iint_{[0,1]^2}
W(u,v)c(M+1,\ell)
\bigl(s_M-s_{M+1}\bigr)
\Delta_t^{M+1}(du)\mu_t^\ell(dv)\\
&\le
C_W(M+1)
\sum_{\ell\ge0}
\iint_{[0,1]^2}
c(M+1,\ell)\,
|\Delta_t^{M+1}|(du)\mu_t^\ell(dv)
\\
&\le
C_WC_c(M+1)^2
\sum_{\ell\ge0}
(1+\ell)
|\Delta_t^{M+1}|([0,1])\mu_t^\ell([0,1])
\\
&=
C_WC_c(M+1)^2
\|\Delta_t^{M+1}\|_{\rm TV}
\bigl(1+m_1(t)\bigr)
\\
&\le
C^{(2)}_T(M+1)^2\|\Delta_t^{M+1}\|_{\rm TV}.
\end{aligned}
\end{equation}
All together \eqref{eq:contrib1},\eqref{eq:contrib2} give
\begin{equation}\label{eq:estI}
I_M(t)
\le
C^{(1)}_T\Theta(t)
+
C^{(2)}_T(M+1)^2\|\Delta_t^{M+1}\|_{\rm TV}.
\end{equation}
We now move to the term \(J_M\), and we estimate it in absolute value. Since
\[
|s_{k-1}-s_k|+|s_{k+1}-s_k|
\le C(1+k),
\]
we get
\[
\begin{aligned}
|J_M(t)|
&\le
C
\sum_{k,\ell\ge0}
(1+k)
\big(c(k,\ell)+c(\ell,k)\big)
\nu_t^k([0,1])\|\Delta_t^\ell\|_{\rm TV}.
\end{aligned}
\]
Using
\[
c(k,\ell)\le C_c k(1+\ell),
\qquad
c(\ell,k)\le C_c\ell(1+k),
\]
we find
\[
|J_M(t)|
\le
C
\left(
\sum_{k\ge0}(1+k)^2\nu_t^k([0,1])
\right)
\left(
\sum_{\ell\ge0}(1+\ell)\|\Delta_t^\ell\|_{\rm TV}
\right).
\]
Lemma~\ref{lem:moment_limit} provides a uniform bound on $[0,T]$ for the second moment of \(\nu_t\), hence
\begin{equation}\label{eq:estJ}
|J_M(t)|\le C^{(3)}_T\Theta(t).
\end{equation}
Combining \eqref{eq:estI}, \eqref{eq:estJ} and renaming constants, we obtain from \eqref{eq:Decomp_theta} that for a.e. \(t\in[0,T]\),
\[
\frac d{dt}\Theta_M(t)
\le
C_T\Theta(t)
+
C_T(M+1)^2\|\Delta_t^{M+1}\|_{\rm TV}.
\]
Therefore
\begin{equation}\label{eq:boundTheta}
\Theta_M(t)
\le \Theta_M(0)+
C_T\int_0^t \Theta(s)\,ds
+
C_T\int_0^t
(M+1)^2\|\Delta_s^{M+1}\|_{\rm TV}\,ds.
\end{equation}
We now let \(M\to\infty\). Since
\[
\|\Delta_s^{M+1}\|_{\rm TV}
\le
\mu_s^{M+1}([0,1])+\nu_s^{M+1}([0,1]),
\]
and
\[
(M+1)^2\mu_s^{M+1}([0,1])
\le
\sum_{k\ge M+1}k^2\mu_s^k([0,1]),
\]
with an analogous estimate for \(\nu_s\), we obtain
\[
\begin{aligned}
(M+1)^2\|\Delta_s^{M+1}\|_{\rm TV}
&\le
(M+1)^2\mu_s^{M+1}([0,1])
+
(M+1)^2\nu_s^{M+1}([0,1])
\\
&\le
\sum_{k\ge M+1}k^2\mu_s^k([0,1])
+
\sum_{k\ge M+1}k^2\nu_s^k([0,1]).
\end{aligned}
\]
From the second moments bounds of Lemma~\ref{lem:moment_limit} we deduce that as $M\to\infty$
\[
\sum_{k\ge M+1}k^2\mu_s^k([0,1])\downarrow 0,
\qquad
\sum_{k\ge M+1}k^2\nu_s^k([0,1])\downarrow 0
\]
for every \(s\), and both tails are bounded by the uniformly integrable function $m_2(s)$, dominated convergence gives
\[
\int_0^t
(M+1)^2\|\Delta_s^{M+1}\|_{\rm TV}\,ds
\overset{M\to\infty}{\longrightarrow} 0.
\]
Moreover, \(\Theta_M(t)\uparrow\Theta(t)\). Hence passing to the limit $M\to\infty$ in \eqref{eq:boundTheta} we obtain
\[
\Theta(t)
\le \Theta(0)+
C_T\int_0^t \Theta(s)\,ds.
\]
By Gronwall's lemma,
\[
\Theta(t)\leq \Theta(0) e^{C_Tt}
\qquad\text{for all }t\in[0,T].
\]
In particular, if \(\mu_0=\nu_0\), then \(\Delta_0^k=0\) for every
\(k\ge0\), and hence \(\Theta(0)=0\). Therefore
$
\Theta(t)=0$,
$t\in[0,T].$
Since
$
\Theta(t)=\sum_{k\ge0}(1+k)\|\Delta_t^k\|_{\rm TV},
$
and every term in the sum is nonnegative, it follows that
\[
\|\Delta_t^k\|_{\rm TV}=0
\qquad\text{for every }k\ge0.
\]
Thus \(\Delta_t^k=0\), i.e. \(\mu_t^k=\nu_t^k\), for every \(k\ge0\).
Consequently,
\[
\mu_t=\nu_t
\qquad\text{for every }t\in[0,T],
\]
which proves uniqueness.
\end{proof}

\subsection{Conclusion of the proof of Theorem~\ref{mainthm}}
\begin{proof}[Proof of Thm.~\ref{mainthm}]
By tightness, provided by Prop.~\ref{tightness}, every subsequence admits a convergent subsequence. The
identification step provided in Subsec.~\ref{subsec:id_limit} shows that any limit point is a weak solution of
\eqref{eq:weak_limit}. By Proposition~\ref{prop:uniqueness}, such a
solution is unique. Therefore all limit points coincide, and the whole
sequence converges. This completes the proof.
\end{proof}
\bibliographystyle{amsplain}
\bibliography{ref_new}

@book {pollard,
    AUTHOR = {Pollard, David},
     TITLE = {Convergence of stochastic processes},
    SERIES = {Springer Series in Statistics},
 PUBLISHER = {Springer-Verlag, New York},
      YEAR = {1984},
     PAGES = {xiv+215},
      ISBN = {0-387-90990-7},
   MRCLASS = {60F05 (60B10)},
  MRNUMBER = {762984},
MRREVIEWER = {R. M. Dudley},
       DOI = {10.1007/978-1-4612-5254-2},
       URL = {https://doi-org.virtual.anu.edu.au/10.1007/978-1-4612-5254-2},
}

@article{AIM1,
  author  = {Andreis, L. and Iyer, T. and Magnanini, E.},
  year={2026},
  title   = {Gelation in cluster coagulation processes},
  journal = {Ann. Inst. H. Poincar{\'e} Probab. Statist.},
  note    = {To appear. arXiv:2308.10232. DOI: 10.1214/25-AIHP1581}
}

@article {AIM2,
    AUTHOR = {Andreis, Luisa and Iyer, Tejas and Magnanini, Elena},
     TITLE = {Convergence of cluster coagulation dynamics},
   JOURNAL = {Electron. J. Probab.},
  FJOURNAL = {Electronic Journal of Probability},
    VOLUME = {31},
      YEAR = {2026},
      pages={1--29},
      ISSN = {1083-6489},
   MRCLASS = {99-06},
  MRNUMBER = {5073009},
       DOI = {10.1214/26-ejp1539},
       URL = {https://doi.org/10.1214/26-ejp1539},
}

@book {Folland,
    AUTHOR = {Folland, Gerald B.},
     TITLE = {Real analysis},
    SERIES = {Pure and Applied Mathematics (New York)},
   EDITION = {Second},
      NOTE = {Modern techniques and their applications,
              A Wiley-Interscience Publication},
 PUBLISHER = {John Wiley \& Sons, Inc., New York},
      YEAR = {1999},
     PAGES = {xvi+386},
      ISBN = {0-471-31716-0},
   MRCLASS = {00A05 (26-01 28-01 46-01)},
  MRNUMBER = {1681462},
}

@Article{armendariz2013zero,
  Title                    = {Zero-range condensation at criticality},
  Author                   = {Armend{\'a}riz, In{\'e}s and Grosskinsky, Stefan and Loulakis, Michail},
  Journal                  = {Stochastic Processes and their Applications},
  Year                     = {2013},
  Number                   = {9},
  Pages                    = {3466--3496},
  Volume                   = {123},
  Publisher                = {Elsevier}
}

@Book{billingsley2013convergence,
  Title                    = {Convergence of probability measures},
  Author                   = {Billingsley, Patrick},
  Publisher                = {John Wiley \& Sons},
  Year                     = {2013}
}

@Article{chleboun2014condensation,
  Title                    = {Condensation in stochastic particle systems with stationary product measures},
  Author                   = {Chleboun, Paul and Grosskinsky, Stefan},
  Journal                  = {Journal of Statistical Physics},
  Year                     = {2014},
  Number                   = {1-2},
  Pages                    = {432--465},
  Volume                   = {154},
  Publisher                = {Springer}
}

@Article{cocozza1985processus,
  Title                    = {Processus des misanthropes},
  Author                   = {Cocozza-Thivent, Christiane},
  Journal                  = {Zeitschrift f{\"u}r Wahrscheinlichkeitstheorie und verwandte Gebiete},
  Year                     = {1985},
  Number                   = {4},
  Pages                    = {509--523},
  Volume                   = {70},
  Publisher                = {Springer}
}

@Book{ethier2009markov,
  Title                    = {Markov processes: characterization and convergence},
  Author                   = {Ethier, Stewart N and 
Kurtz, Thomas G},
  Publisher                = {John Wiley \& Sons},
  Year                     = {2009},
  Volume                   = {282}
}

@Article{evans2014condensation,
  Title                    = {Condensation in stochastic mass transport models: beyond the zero-range process},
  Author                   = {Evans, Martin R and Waclaw, Bart{\l}omiej},
  Journal                  = {Journal of Physics A: Mathematical and Theoretical},
  Year                     = {2014},
  Number                   = {9},
  Pages                    = {095001},
  Volume                   = {47},
  Publisher                = {IOP Publishing}
}

@Article{godreche2003dynamics,
  Title                    = {Dynamics of condensation in zero-range processes},
  Author                   = {Godr{\`e}che, Claude},
  Journal                  = {Journal of Physics A: Mathematical and General},
  Year                     = {2003},
  Number                   = {23},
  Pages                    = {6313},
  Volume                   = {36},
  Publisher                = {IOP Publishing}
}

@Article{godreche2016coarsening,
  Title                    = {Coarsening dynamics of zero-range processes},
  Author                   = {Godr{\`e}che, Claude and Drouffe, Jean-Michel},
  Journal                  = {Journal of Physics A: Mathematical and Theoretical},
  Year                     = {2016},
  Number                   = {1},
  Pages                    = {015005},
  Volume                   = {50},
  Publisher                = {IOP Publishing}
}

@Article{godreche2005dynamics,
  Title                    = {Dynamics of the condensate in zero-range processes},
  Author                   = {Godr{\`e}che, Claude and Luck, Jean-Marc},
  Journal                  = {Journal of Physics A: Mathematical and General},
  Year                     = {2005},
  Number                   = {33},
  Pages                    = {7215},
  Volume                   = {38},
  Publisher                = {IOP Publishing}
}

@article{grosskinsky2019derivation,
  title={Derivation of mean-field equations for stochastic particle systems},
  author={Grosskinsky, Stefan and Jatuviriyapornchai, Watthanan},
  journal={Stochastic Processes and their Applications},
  volume={129},
  number={4},
  pages={1455--1475},
  year={2019},
  publisher={Elsevier}
}

@article{chleboun2023sizebiased,
      title={Size-biased diffusion limits and the inclusion process}, 
      author={Paul Chleboun and Simon Gabriel and Stefan Grosskinsky},
      year={2023},
      eprint={2304.09722},
      archivePrefix={arXiv},
  journal={arXiv preprint arXiv:2304.09722},
      primaryClass={math.PR}
}

@Article{jatuviriyapornchai2016coarsening,
  Title                    = {Coarsening dynamics in condensing zero-range processes and size-biased birth death chains},
  Author                   = {Jatuviriyapornchai, Watthanan and Grosskinsky, Stefan},
  Journal                  = {Journal of Physics A: Mathematical and Theoretical},
  Year                     = {2016},
  Number                   = {18},
  Pages                    = {185005},
  Volume                   = {49},

  Publisher                = {IOP Publishing}
}

@Article{pakdaman2010fluid,
  Title                    = {Fluid limit theorems for stochastic hybrid systems with application to neuron models},
  Author                   = {Pakdaman, Khashayar and Thieullen, Michele and Wainrib, Gilles},
  Journal                  = {Advances in Applied Probability},
  Year                     = {2010},
  Number                   = {3},
  Pages                    = {761--794},
  Volume                   = {42},

  Publisher                = {Cambridge University Press}
}

@Misc{daipra2017,
  Title                    = {Stochastic mean-field dynamics and applications to life sciences},

  Author                   = {Paolo Dai Pra},
  HowPublished             = {\url{http://www.cirm-math.fr/ProgWeebly/Renc1555/CoursDaiPra.pdf}},
  Note                     = {Accessed 12/07/17},
  Year                     = {2017}
}

@inproceedings{Lovsz2012LargeNA,
  title={Large Networks and Graph Limits},
  author={L{\'a}szl{\'o} Mikl{\'o}s Lov{\'a}sz},
  booktitle={Colloquium Publications},
  year={2012},
  url={https://api.semanticscholar.org/CorpusID:27057044}
}

@article{SGAKtagged,
author = {Stefan Grosskinsky and Angeliki Koutsimpela},
title = {{Tagged particles and size-biased dynamics in mean-field interacting particle systems}},
volume = {30},
journal = {Electronic Communications in Probability},
number = {none},
publisher = {Institute of Mathematical Statistics and Bernoulli Society},
pages = {1 -- 19},
year = {2025},
doi = {10.1214/25-ECP669},
URL = {https://doi.org/10.1214/25-ECP669}
}

@article{Koutsimpela2025MeanField,
  title         = {Mean-field limits in interacting particle systems with symmetric superlinear rates},
  author        = {Koutsimpela, Angeliki},
  journal       = {arXiv preprint arXiv:2509.19617},
  year          = {2025},
  eprint        = {2509.19617},
  archivePrefix = {arXiv},
  primaryClass  = {math.PR},
  doi           = {10.48550/arXiv.2509.19617}
}

@article{Bet_Coppini_Nardi_2024, 
title={Weakly interacting oscillators on dense random graphs}, 
volume={61}, 
DOI={10.1017/jpr.2023.34}, 
number={1}, 
journal={Journal of Applied Probability}, author={Bet, Gianmarco and Coppini, Fabio and Nardi, Francesca Romana}, 
year={2024}, 
pages={255–278}}

@article{oliveira2018interacting,
title={Interacting Diffusions on Random Graphs with Diverging Average Degrees: Hydrodynamics and Large Deviations},
volume={176},
DOI={10.1007/s10955-019-02332-1},
number={5},
journal={Journal of Statistical Physics},
author={Oliveira, Roberto I. and Reis, Guilherme H.},
year={2019},
pages={1057--1087}}

@article{COPPINI2025104728,
title = {Nonlinear Graphon mean-field systems},
journal = {Stochastic Processes and their Applications},
volume = {190},
pages = {104728},
year = {2025},
issn = {0304-4149},
doi = {https://doi.org/10.1016/j.spa.2025.104728},
url = {https://www.sciencedirect.com/science/article/pii/S0304414925001693},
author = {Fabio Coppini and Anna {De Crescenzo} and Huyên Pham}
}

\end{document}